\theoremstyle{plain}
\newtheorem{theorem}{Theorem}
\newtheorem{corollary}{Corollary}[section]
\newtheorem{proposition}{Proposition}[section]
\theoremstyle{proof}
\theoremstyle{definition}
\newcommand \ZZ {{\mathbb Z}}
\newcommand \CC {{\mathbb C}}
\newcommand \PR {{\mathbb P}}
\newcommand \AF {{\mathbb A}}
\newcommand \QQ {{\mathbb Q}}
\newcommand \bcL {{\mathscr L}}
\newcommand \bcP {{\mathscr P}}
\newcommand \bcS {{\mathscr S}}
\newcommand \bcV {{\mathscr V}}
\newcommand \bcZ {{\mathscr Z}}
\newcommand \Spec {{\rm {Spec}}}
\newcommand \Supp{{\rm {supp}}}
\newcommand \Pic {{\rm {Pic}}}
\newcommand \ev {{\it {ev}}}
\newcommand \Hom {{\rm Hom}}
\newcommand \Cl{{\rm{Cl}}}
\newcommand \Proj {{\rm {Proj}}}
\newcommand \CH {{\it {CH}}}
\newcommand \wt {\widetilde }
\theoremstyle{remark}
\newtheorem{remark}{Remark}[section]
\theoremstyle{lamma}
\newtheorem{thm}{Theorem}[section]
\newtheorem{prop}[thm]{Proposition}
\newtheorem{thma}{Theorem}
\numberwithin{equation}{section}
\numberwithin{lemma}{section}
\numberwithin{theorem}{section}
\theoremstyle{thmrm}
\begin{document}
\title[Chow groups, pull back and class groups]{Chow groups, pull back and class groups}
\author{\small Kalyan Banerjee and Azizul Hoque}
\address{Kalyan Banerjee @SRM University AP, Mangalagiri-Mandal, Amaravati, Guntur-522240, Andhra Pradesh, India.}
\email{kalyan.ba@srmap.edu.in}
\address{Azizul Hoque @Department of Mathematics, Faculty of Science, Rangapara College, Rangapara, Sonitpur-784505, Assam, India.}
\email{ahoque.ms@gmail.com}

\keywords{Picard group, Class group, Hyperelliptic surface, Imaginary quadratic field.}
\subjclass[2020] {Primary: 11R29, 11R65 Secondary: 14C20}
\maketitle
\begin{abstract}
Let $S$ be a certain affine algebraic surface over $\QQ$ such that it admits a regular map to $\AF^2/\QQ$. We show that any non-trivial torsion element in the  Chow group $\CH^1(S)$ can be pulled back to ideal classes of quadratic fields whose order can be made as large as possible. This gives an affirmative answer to a question analogous to one raised by Agboola and Pappas, in the case of certain affine algebraic surfaces. Spreading out $S$ over $\ZZ$ and for a closed point $P\in \AF^2/\ZZ$, we show that the cardinality of a subgroup of the Picard group of the fiber $S_P$ remains unchanged when $P$ varies over a Zariski open subset in $\AF^2/\ZZ$. We also show by constructing an element of odd order $n\geq 3$ in the class group of certain imaginary quadratic fields that the Picard group of $S_P$ has a subgroup isomorphic to $\ZZ/n\ZZ$.
\end{abstract}

\section{Introduction}
Notations: By $\AF^2$ we mean an affine plane; let us  denote a spread (or an integral model) of $S$ over $\ZZ$ by $\bcS$. Given a point $P$ in $\AF^2/\ZZ$, by $\bcS_P$ we mean a fiber of $\bcS\to \AF^2/\ZZ$ at $P$; we denote by $K$, a number field. For a number field $K$, by $\mathcal{O}_K$ we mean the ring of integers of $K$.

Let $S$ be an affine algebraic surface defined over $\QQ$ by the equation
$$y^2=t^2q^2-z^n,$$
where $y,t,z$ are indeterminates and $n, q\in\ZZ$ with $q$ prime.
Suppose that it admits the  regular map to $\AF^2$ over $\QQ$ given by
$$(y,t,z)\mapsto (t,z).$$
We can spread the surface over $\ZZ$ and then consider the spread over $\AF^2/\ZZ$. Denote this spread by $\bcS$. Given a point $P$ in $\PR^2/K$, it corresponds to a point on $\PR^2/\mathcal{O}_K$ and vice versa. One of the goals of this paper is to understand  the following situation in the case of imaginary quadratic fields. Let $\bcL$ be a non-trivial torsion Weil divisor class in the  Chow group $\CH^1(\bcS)$ of co-dimension one cycles on $\bcS$.
For $P \in \AF^2/\ZZ$, consider the pull back of $\bcL$ over the normalization of $\bcS_P$, and denote it by $\wt{\bcS_P}$. This corresponds to an element in $\CH^1(\wt{\bcS_P})$ which is isomorphic to $\Cl(\QQ(S_P))$. Here $\QQ(S_P)$ is the field given by
$$\QQ[y]/(y^2-m^2q^2+l^n)$$
for a point $(m,l)$ in $\AF^2/\ZZ$, such that
$y^2-m^2q^2+l^n$
is irreducible.

The question is whether this element is a non-trivial ideal class in the number field $\QQ(S_P)$?

This is an analogous question raised by Agboola and Pappas in \cite{AP}, that is: let $S$ be any affine, smooth algebraic surface fibered over $\AF^2$, then any non-trivial element in the Picard group of $\bcS$ specializes to a non-trivial element of the ideal class group of the number field $\QQ(S_P)$ for a general choice of $P$. Gillibert and Levin \cite{GL} affirmatively answered the question raised by Agboola and Pappas in the case of torsion line bundles on a hyperelliptic curve. Gillibert \cite{GI} also considered a similar question in case of non-trivial line bundles of degree zero on certain hyperelliptic curves which are not torsion.

Another goal of this paper is to show, for a closed point $P$ in $\AF^2/\QQ$ (which corresponds to a point in $\AF^2/\ZZ$), 
the cardinality of certain  subgroups of the divisor class group of the normalization of the fiber $\bcS_P$ remains constant when $P$ varies over a Zariski open subset in $\AF^2/\ZZ$. Moreover, we show that this class group has a subgroup isomorphic to $\ZZ/n\ZZ$ by constructing an element of odd order $n\geq 3$ in the class group of certain families of imaginary quadratic fields. Producing an element of a given order in a class group of an imaginary quadratic field (in fact, any number field) provides more information about the class group than showing the divisibility of the class number of the same field. However, the divisibility properties of the class number of a number field help us to understand the structure of class group. Many families of imaginary quadratic fields with class number divisible by a given integer $n\geq 2$ are known. Most of such families are of the type $\mathbb{Q}(\sqrt{k^2-\lambda t^n})$, where $k$ and $t$ are positive integers with some restrictions, and $\lambda =1, 2, 4$ (for $\lambda=1$ see
\cite{AC55, GL-18, H-22, KI09,  SO00}; for $\lambda=2$ see \cite{CH-19, CH-23, HC-18}
and for $\lambda=4$ see \cite{GR01, HO20, LO09}).
Here, we consider the family $K_{k, p, n}=\mathbb{Q}(\sqrt{k^2-p^n})$, where $k, n$ are positive integers and $p$ is an odd prime such that $k^2<p^n$. We produce an ideal class of order $n$ in $K_{k,p,n}$, and then we use this ideal class to show that the divisor class group of the normalization of the fiber $\bcS_P$ has a subgroup which is isomorphic to $\ZZ/n\ZZ$. The main idea that used to connect the two concepts of $n$-divisibility is to understand the fiber of the natural map from the relative Chow schemes to the relative Picard scheme associated to a flat family of projective schemes.

\section{Mumford-Ro\u{i}tman argument on Chow schemes and relative Picard schemes}

Let $B$ be a smooth scheme over $\Spec(\ZZ)$. For a flat morphism $X\to B$ of projective schemes, consider a vector bundle $E$ of constant rank over $B$ such that it gives a closed embedding of $X/B$ into some projective space $\PR(E)/B$, where
$\PR(E)=\Proj_B(E)$.
This determines the degree of the subschemes of relative co-dimension one subschemes in $X/B$. We consider the Chow variety\footnote{We refer \cite{Ko} for the theory of Chow variety.}, $C^1_d(X/B)$ of relative co-dimension one subschemes of $X\to B$ of degree $d$, that is,
$$C^1_{d}(X/B)=\{(D_b,b)|\Supp(D_b)\subset X_b, \deg(D_b)=d\}.$$
Let us explain what is $D_b$. Consider the closed embedding of $X/B$ into $\PR(E)$. Let us consider a closed subscheme $D/B$ of relative co-dimension one in $\PR(E)$ and consider its intersection with $X/B$, which is defined by $D_X:=D\times_B X$. This is well defined by the smoothness of $B$. In this terms, $D_b:=D_X\cap X_b$.

Then there is a natural map $C^1_d(X_b)\to \CH^1(X_b)$ associated to a $D_b$ to its divisor class $[D_b]$. In this set up, we consider the following:
$$\bcZ:=\{(b,D_b)|[D_b]=0 \in \CH^1(X_b)\}.$$
The proof of the following theorem is based on the idea introduced by Mumford in \cite{M}, which was further elaborated by Ro\u{i}tman in \cite{R} and Voisin in \cite{Voi}. This idea was used and presented in \cite{BG}.

\begin{theorem}
\label{theorem1}
$\bcZ$ is a countable union of Zariski closed subsets in $C^1_{d}(X/B)$.
\end{theorem}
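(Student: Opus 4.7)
The plan is to adapt the classical Mumford--Ro\u{\i}tman parametrization argument to this relative setting. The starting observation is that $[D_b]=0$ in $\Pic(X_b)$ is equivalent to $D_b$ being principal, i.e.\ $D_b=\Div(f)$ for some nonzero rational function $f$ on $X_b$. After fixing a relatively ample line bundle $\bcL$ on $X/B$, any such $f$ can be written as a ratio of sections of a power of $\bcL$, so that $D_b = D_1 - D_2$ for a pair $(D_1,D_2)$ of linearly equivalent effective divisors of bounded Hilbert data. The essential point is that the Hilbert data parametrizing such pairs varies in a countable set, which will be the source of the countable decomposition.

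For each relevant pair $(m,n)$ of Hilbert polynomials, I would form the fiber product
\[
P_{m,n}\;:=\;C^1_m(X/B)\times_B C^1_n(X/B),
\]
and identify the locus $L_{m,n}\subset P_{m,n}$ of pairs $(D_1,D_2)$ with $D_1\sim D_2$ on the fiber $X_b$. By Grothendieck's upper semi-continuity theorem applied to $\bcO(D_1-D_2)$ on the universal family, the condition that both $h^0(\bcO(D_1-D_2))\geq 1$ and $h^0(\bcO(D_2-D_1))\geq 1$ cuts out a Zariski closed subset of $P_{m,n}$; this subset is precisely $L_{m,n}$, since a line bundle on a proper fiber is trivial iff both it and its dual admit a global section. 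The natural morphism
\[
\phi_{m,n}\colon L_{m,n}\longrightarrow C^1_d(X/B),\quad (D_1,D_2)\longmapsto D_1-D_2,
\]
taken on the subscheme where the difference is effective of degree $d$, is proper because $L_{m,n}$ is projective over $B$, so $\phi_{m,n}(L_{m,n})$ is Zariski closed in $C^1_d(X/B)$. Since every $(b,D_b)\in\bcZ$ arises in this way for some $(m,n)$, one concludes
\[
\bcZ\;=\;\bigcup_{(m,n)}\phi_{m,n}(L_{m,n}),
\]
a countable union of Zariski closed subsets.

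The main obstacle I anticipate is the exhaustiveness step together with the technical issue that $D_1-D_2$ need not be globally effective on all of $L_{m,n}$. Exhaustiveness requires a uniform boundedness argument for the Hilbert data of auxiliary pairs $(D_1,D_2)$ as $b$ varies, which in turn rests on the relative ampleness of $\bcL$ together with the standard flat-family boundedness for effective divisors of a fixed numerical type. The effectivity issue can be handled by restricting $\phi_{m,n}$ to the appropriate open subscheme of $L_{m,n}$, but one should verify that this restriction still surjects onto the relevant piece of $\bcZ$; this is where I expect to follow most closely the treatment in \cite{BG} referenced by the authors.
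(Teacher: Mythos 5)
Your argument is essentially correct for this statement, but it is a genuinely different route from the paper's. The paper follows the Mumford--Ro\u{\i}tman parametrization of rational equivalences: a relation $D_b^+\sim D_b^-$ is witnessed by a morphism $f:\PR^1\to C^1_{d+u,u}(X_b)$ of some degree $v$ with $f(0)=(D_b^++\gamma,\gamma)$ and $f(\infty)=(D_b^-+\gamma,\gamma)$; the authors package these into the schemes $U_{v,d}(X)\subset B\times\Hom^v(\PR^1_k,C^1_d(X))$ (via Koll\'ar's Hom-scheme), take images of the evaluation map $(b,f)\mapsto(b,f(0),f(\infty))$ to get constructible pieces $W_d^{u,v}$, and then show by a curve-lifting and normalization argument that the Zariski closure of each piece still consists of rationally trivial divisors. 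The countability in their proof is indexed by the degree $v$ of the parametrizing rational curve and the degree $u$ of the auxiliary divisor $\gamma$. You instead detect linear equivalence cohomologically, via semicontinuity of $h^0$ applied to $\bcO(\mathcal{D}_1-\mathcal{D}_2)$ on the universal family. For divisors this is arguably the better tool: applied directly to the pairs space $C^1_{d,d}(X/B)$ (which is where the paper's $\bcZ$ actually lives, since an effective divisor of positive degree is never linearly trivial) it shows the locus $\{(b,A,B): A\sim B \text{ on } X_b\}$ is a \emph{single} Zariski closed subset, with no countable union and no pushforward $\phi_{m,n}$ needed at all; the auxiliary Hilbert data $(m,n)$ and the effectivity/properness issues you worry about then evaporate. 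What you give up is generality: the semicontinuity criterion ``trivial iff $L$ and $L^{-1}$ both have sections'' requires the fibers $X_b$ to be integral (it fails for disconnected or reducible fibers, which do occur in this family before one shrinks $B$), it requires a universal \emph{Cartier} divisor so that $\bcO(\mathcal{D}_1-\mathcal{D}_2)$ is an honest line bundle flat over the parameter space (delicate on singular fibers), and it is intrinsically a codimension-one argument, whereas the paper's Hom-scheme method is the one that survives for higher-codimension cycles, which is the context the authors are importing the argument from. If you adopt your route you should either restrict to the open locus of integral fibers and treat the bad fibers separately, or fall back on the paper's parametrization there.
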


\begin{proof}
Let us first fix the notation, $C^1_{d,d}(X_b):=C^1_d(X_b)\times C^1_d(X_b)$. The arguments present here are relative in nature that is all the fiberwise phenomena can be described in terms of relative cycles on relative Chow schemes. We refer \cite{Ko, SV} to the readers for understanding the relative cycles. 
Assume that the relation $D_b=D_b^+-D_b^-$ is rationally equivalent to zero. This means that there exists a map $f_b:\PR^1\to C^1_{d,d}(X_b)$ such that
$$f_b(0)=D_b^{+}+\gamma\text{ and }f_b(\infty)=D_b^{-}+\gamma,$$
where $\gamma$ is a positive divisor on $X_b$.
In other words, we have the following map: $$\ev:Hom^v(\PR^1_B,C^1_{d}(X/B))\to C^1_d(X/B)\times C^1_d(X/B),$$ given by $f\mapsto (f(0_B),f(\infty_B))$, where $0_B$ and $\infty_B$ are the sections of $B\to \PR^1_B$ defined by $b\mapsto (b,0)$ and $b\mapsto (b,\infty)$ respectively, and the image of $f$ intersected with $X_b$ is the image of $f_b$ contained in $C^1_{d,d}(X_b)$. Here, $Hom^v$ denotes the Hom-variety parametrizing the morphisms of certain schemes over $B$. 

Let us denote $C^1_d(X/B)$ by $C^1_d(X)$ for simplicity.
We now consider the subscheme $U_{v,d}(X)$ of $B\times Hom^v(\PR^1_B,C^1_{d}(X))$ consisting of the pairs $(b,f)$ such that the image of $f$ intersected with $X_b$ is $f_b$ (such a universal family exists, for example see \cite[Theorem 1.4]{Ko}). This gives a morphism from $U_{v,d}(X)$ to $B\times C^1_{d,d}(X)$ defined by $$(b,f)\mapsto (b,f_b(0),f_b(\infty)).$$
Again, we consider the closed subscheme $\bcV_{d,d}$ of $B\times C^1_{d,d}(X)$ given by $(b,z_1,z_2)$, where $(z_1,z_2)\in C^1_{d,d}(X_b)$. Suppose that the map from $\bcV_{d,u,d,u}$ to $\bcV_{d+u,u,d+u,u}$ is given by
$$(A,C,B,D)\mapsto (A+C,C,B+D,D).$$
Then one writes the fiber product $\bcV$ of $U_{v,d}(X)$ and $\bcV_{d,u,d,u}$ over $\bcV_{d+u,u,d+u,u}$. If we consider the projection from $\bcV$ to $B\times C^1_{d,d}(X)$, then we observe that $A$ and $B$ are supported as well as rationally equivalent on $X_b$. Conversely, if $A$ and $B$ are supported as well as rationally equivalent on $X_b$, then one gets the map $$f_b:\PR^1_b\to C^1_{d+u,u,d+u,u}(X_b)$$ of some degree $v$ satisfying
$$f_b(0)=(A+C,C)\text{ and } f_b(\infty)=(B+D,D),$$
where $C$ and $D$ are supported on $X_b$. This implies that the image of the projection from $\bcV$ to $B\times C^1_{d,d}(X)$ is a quasi-projective subscheme $W_{d}^{u,v}$ consisting of the tuples $(b,A,B)$ such that $A$ and $B$ are supported on $X_b$, and that there exists a map $$f_b:\PR^1_b\to C^1_{d+u,u}(X_b)$$ such that $f_b(0)=(A+C,C)$ and $f_b(\infty)=(B+D,D)$. Here $f_b$ is of degree $v$, and $C,D$ are supported on $X_b$ and they are of co-dimension one and degree $u$ cycles. This shows that $W_d=\cup_{u,v} W_d^{u,v}$. We now prove that the Zariski closure of $W_d^{u,v}$ is in $W_d$ for each $u$ and $v$. For this, we prove the following:
$$W_d^{u,v}=pr_{1,2}(\wt{s}^{-1}(W^{0,v}_{d+u}\times W^{0,v}_u)),$$
where
$$\wt{s}: B\times C^1_{d,d,u,u}(X)\to B\times C^1_{d+u,d+u,u,u}(X)$$
is defined by
$$\wt{s}(b,A,B,C,D)=(b,A+C,B+D,C,D).$$

We consider $(b,A,B,C,D)\in B\times C^1_{d,d,u,u}(X)$ such that $\wt{s}(b,A,B,C,D)\in W^{0,v}_{d+u}\times W^{0,v}_u$. This implies that there exists an element $(b,g)\in B\times\Hom^v(\PR^1_B,C^p_{d+u}(X))$ and an element $(b,h)\in \Hom^v(\PR^1_B,C^p_{u}(X))$ satisfying $$g_b(0)=A+C,~g_b(\infty)=B+D \text{ and } h_b(0)=C,h_b(\infty)=D$$ as well as the images of $g_b$ and $h_b$ are contained in $C^1_{d+u}(X_b)$ and  $C^1_u(X_b)$ respectively.

Also if $f=g\times h$ then $f\in \Hom^v(\PR^1_B,C^p_{d+u,u}(X))$ such that the intersection of the  image of $f$ with $X_b$ gives a closed point on $C^1_{d+u,u}(X_b)$ as well as it satisfies the following:
$$f_b(0)=(A+C,C)\text{ and }(f_b(\infty))=(B+D,D).$$
This shows that $(b,A,B)\in W^d_{u,v}$.

On the other hand, we assume that $(b,A,B)\in W^d_{u,v}$. Then there exists $f\in \Hom^v(\PR^1_B,C^1_{d+u,u}(X))$ such that
$$f_b(0)=(A+C,C)\text{ and }f_b(\infty)=(B+D,D),$$
and image of $f$ intersection $X_b$ is the image of  $f_b$.

We now compose $f$ with the projections to respectively $C^1_{d+u}(X)$ and $C^1_{u}(X)$ to get a map $g\in \Hom^v(\PR^1_B,C^1_{d+u}(X))$ and a map $h\in\Hom^v(\PR^1_B,C^1_{u}(X))$ satisfying
$$g_b(0)=A+C,~g_b(\infty)=B+D$$
and
$$h_b(0)=C,~h_b(\infty)=D.$$
Also, the images of $g_b$ and $h_b$ are contained in the respective Chow schemes of the fibers $X_b$. Therefore, we have
$$W_d=pr_{1,2}(\wt{s}^{-1}(W_{d+u}\times W_u)).$$
We are now in a position to prove that the closure of $W_d^{0,v} $ is contained in $W_d$. Let $(b,A,B)$ be a closed point in the closure of ${W_d^{0,v}}$. Let $W$ be an irreducible component of ${W_d^{0,v}}$ whose closure contains $(b,A,B)$. Assume that $U$ is an affine neighborhood of $(b,A,B)$ such that $U\cap W$ is non-empty. Then there is an irreducible curve $C$ in $U$ passing through $(b,A,B)$. Suppose that $\bar{C}$ is the Zariski closure of $C$ in $\overline{W}$. The map
$$e:U_{v,d}(X)\subset B\times \Hom^v(\PR^1_B,C^1_{d}(X))\to C^1_{d,d}(X)$$
given by
$$(b,f)\mapsto (b,f_b(0),f_b(\infty))$$
is regular and $W_d^{0,v}$ is its image. We now choose a curve $T$ in $U_{v,d}(X)$ such that the closure of $e(T)$ is $\bar C$.  Let $\wt{T}$ be denote the normalization of the Zariski closure of $T$, and $\wt{T_0}$ the pre-image of $T$ in this normalization. Then the regular morphism $\wt{T_0}\to T\to \bar C$ extends to a regular morphism from $\wt{T}$ to $\bar C$. If $(b,f)$ is a pre-image of $(b,A,B)$, then $f_b(0)=A,~ f_b(\infty)=B$ and the image of $f_b$ is contained in $C^p_{d}(X_b)$ by the definition of $U_{v,d}(X)$. Therefore, $A$ and $B$ are  rationally equivalent. This completes the proof.
\end{proof}

As a consequence, for a fixed positive integer $n$ which need not be the same as $d$, one gets the following:

\begin{corollary}\label{cor2.1} The collection
$$\bcZ_{n,d}:=\{(b,D_b)|n[D_b]=0 \in \CH^1(X_b)\}$$
is a countable union of Zariski closed subsets in the scheme $C^1_d(X/B)$. 
\end{corollary}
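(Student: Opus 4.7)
The plan is to deduce Corollary \ref{cor2.1} from Theorem \ref{theorem1} by pulling back along a multiplication-by-$n$ morphism on the relative Chow scheme. The key observation is that the condition $n[D_b]=0$ in $\Pic(X_b)$ is equivalent to $[nD_b]=0$, that is, the effective divisor $nD_b$ of degree $nd$ on $X_b$ is rationally equivalent to zero.

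First I would construct the morphism
$$\mu_n\colon C^1_d(X/B)\lra C^1_{nd}(X/B),\qquad (b,D_b)\mapsto (b,nD_b).$$
This arises by iterating the fibrewise addition of cycles
$$C^1_{d}(X/B)\times_B C^1_{d}(X/B)\lra C^1_{2d}(X/B),\qquad (A,B)\mapsto A+B,$$
a total of $n-1$ times and restricting the resulting morphism $C^1_d(X/B)^{\times_B n}\to C^1_{nd}(X/B)$ to the small diagonal. Addition of effective cycles with multiplicities is a regular operation on Chow forms, so $\mu_n$ is a morphism of $B$-schemes.

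Next, applying Theorem \ref{theorem1} with degree $nd$ in place of $d$ tells us that
$$\bcZ^{(nd)}:=\{(b,E_b)\in C^1_{nd}(X/B)\mid [E_b]=0\in \Pic(X_b)\}$$
is a countable union of Zariski closed subsets of $C^1_{nd}(X/B)$. By the equivalence noted above, one has
$$\bcZ_d=\mu_n^{-1}\bigl(\bcZ^{(nd)}\bigr).$$
Since preimage under a morphism preserves Zariski closedness and commutes with countable unions, it follows that $\bcZ_d$ is a countable union of Zariski closed subsets of $C^1_d(X/B)$, which is the desired conclusion.

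The one point that needs care is verifying that $\mu_n$ is genuinely a morphism of relative Chow schemes rather than just a set-theoretic map; this is a routine check via the universal property of $C^1_{nd}(X/B)$ applied to the universal effective divisor of degree $d$ multiplied by $n$. Everything else is a formal manipulation of countable unions of closed sets under a continuous morphism, so I do not anticipate any further obstruction beyond the bookkeeping already executed in the proof of Theorem \ref{theorem1}.
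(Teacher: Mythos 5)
Your proposal is correct and matches the route the paper intends: the paper states this corollary without proof as an immediate consequence of Theorem \ref{theorem1}, and the standard way to make that deduction precise is exactly your observation that $n[D_b]=0$ is equivalent to $[nD_b]=0$ together with pullback along the multiplication-by-$n$ morphism of Chow schemes (whose regularity the paper already uses implicitly via the cycle-addition maps in the proof of Theorem \ref{theorem1}). Your writeup in fact supplies details the paper omits.
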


\section{Mumford-Roitman arguments and monodromy representation}
\label{section monodromy}

Throughout this section we fix a positive integer $n$.
The main  idea of this section, is due to Voisin \cite[Chapter 3]{Vo}. We consider the case when $X$ is a smooth, projective algebraic surface over $\bar \QQ$ fibered over a projective algebraic curve over $\bar \QQ$. Then the fibers of the map $X\to B$ are projective algebraic curves. Moreover, assume that the fiber $X_b$ is smooth for a general $b\in B$. We now consider the complexification of the above family. That is, we consider the map $X_{\CC}\to B_{\CC}$, where $X_{\CC}=X\times _{\bar \QQ}\CC$. Then there is a map $$\varphi:\CH^1(X_b)\to\CH^1(X_{b\CC})$$ given by pull back. The set of all images of the  $n$-torsions on $\CH^1(X_b)$ under $\varphi$ is a subgroup of the group of torsions in $\CH^1(X_{b\CC})$. Assume  $\bcZ_{n,d}$ as defined in Corollary \ref{cor2.1}. Consider the natural morphism from $C^1_d(X/B)$ to $\Pic^0(X/B)$ and continue to call the image of $\bcZ_{n,d}$ as $\bcZ_{n,d}$ under this morphism. To construct $\Pic^0$ we need to remove the singular fibers of $X\to B$. After removing the points from $B$ corresponding to the singular fibers we obtain a Zariski open subset of $B$, which we call as $B$. Let $\bcZ_n=\cup_i \bcZ_{n,i}\subset \Pic^0(X/B)$. Then $$\bcZ_n=\{(D,b)\mid n[D]=0\}.$$
We consider the map $\bcZ_n\to B$. Then for any $b\in B$ there exists a $D$ supported on $X_b$ such that $n[D]=0$ on $\CH^1(X_b)$. If $B_{\CC}$ is connected then there exists a $\bcZ_{n,i}$ such that the map from $\bcZ_{n,i\CC}$ to $B_{\CC}$ is onto. Therefore, if $U$ is the Zariski open set in $B$ consisting of $b$ such that $X_b$ is smooth then one gets a surjective map $\mathfrak{f}: \bcZ_{n,i\CC,U}\to U_{\CC}$. By removing a finite collection of points from $U_{\CC}$, one can assume that the map $\mathfrak{f}$ is smooth and proper in the sense of underlying smooth manifolds. Hence, $\mathfrak{f}$ is a fibration in the sense of differential topology. It is easy to see that $\mathfrak{f}$ has finite fibers since these fibers contain $n$-torsion points on $\CH^1(X_{b,\CC})$. In fact, the $n$-torsion points are inside $\Pic^0(X_{b\CC})$, which is an abelian variety isomorphic to $J(X_{b,\CC})$, the Jacobian variety of $X_{b\CC}$.

We now consider the first cohomology $H^1(J(X_{b\CC}),\QQ)$ which is also isomorphic to $H^1(X_{b,\CC},\QQ)$. We also consider the cohomology classes in $H^1(J(X_{b\CC}),\QQ)$ of the elements in the fibers of $\mathfrak{f}$, and let  $V_b$ be its $\QQ$-span. Since  this map $\mathfrak{f}$ is smooth and proper, the cohomology classes in $V_b$ gives rise to a $\pi_1(U_{\CC},b)$ module which is a $\pi_1(U_{\CC},b)$ submodule of $H^1(X_{b,\CC},\QQ)$. Therefore, the cohomology classes of the torsion points on the fibers of $\bcZ_{n,i\CC,U}\to U_{\CC}$ give rise to a locally constant sheaf $\bcL$ over $U_{\CC}$ by the equivalence between locally constant sheaves and $\pi_1(U_{\CC},b)$-module representations. Hence, the dimension of the vector space $V_b$ over $\QQ$ remains constant as $U$ is connected. Thus, the cardinality of the finite group of $n$-torsions coming from $\bcZ_{n,i\CC,U}\to U_{\CC}$ in $\Pic(X_{b\CC})$ remains constant. Therefore the cardinality of $n$-torsions in $\Pic^0(X_{b\CC})$, which are defined over $\bar K$ is also constant. This is due to $\Pic^0(X_{b\CC})\cong {\Pic^0(X_b)}_{\CC}$. This gives the following result.

\begin{theorem} \label{theorem2}
The cardinality of the subgroup of $n$-torsions in $X_b$ arising from the fibration $\bcZ_{n,i\CC,U}\to U_{\CC}$ for each $b\in U$ remains constant and they vary in a family.
\end{theorem}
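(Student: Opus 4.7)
The plan is to invoke Corollary \ref{cor2.1} to organize the torsion of $\Pic(X/B)$ into finite-dimensional Zariski closed pieces, and then to deduce topological constancy by passing to the analytic category and applying Ehresmann's theorem together with monodromy on the Gauss--Manin local system.

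First, from Corollary \ref{cor2.1} I would write $\bcZ=\bigcup_i \bcZ_i$ as a countable union of Zariski closed subsets inside $C^1_d(X/B)$, and push them into $\Pic(X/B)$ via $\theta_d$. Since $B_{\CC}$ is connected and is covered set-theoretically by the images, at least one index $i$ yields a surjective map $\bcZ_{i,\CC}\to B_{\CC}$. Restricting to the Zariski open locus $U\subset B$ over which the fiber $X_b$ is smooth, this furnishes the morphism $\mathfrak{f}:\bcZ_{i,\CC,U}\to U_{\CC}$ that one needs to analyze.

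Second, I would then arrange that $\mathfrak{f}$ is a smooth proper map of complex manifolds with finite fibers. Smoothness and properness hold after excising a further finite bad locus in $U_{\CC}$; finiteness of the fibers is immediate, since each fiber consists of torsion classes sitting inside the abelian variety $\Pic^0(X_{b,\CC})\cong J(X_{b,\CC})$, and the $n$-torsion of an abelian variety is finite. By Ehresmann's fibration theorem the resulting map is a locally trivial $C^{\infty}$ fibration with finite fibers, hence a finite topological covering. To extract the numerical invariant, I would reinterpret this covering via monodromy: for each $b\in U_{\CC}$ the $\QQ$-span $V_b\subset H^1(X_{b,\CC},\QQ)$ of the cohomology classes of the torsion points in $\mathfrak{f}^{-1}(b)$ forms a sub-local system of the Gauss--Manin local system $R^1\mathfrak{f}_{*}\QQ$. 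Since $U_{\CC}$ is path connected, $\dim_{\QQ} V_b$ is independent of $b$; equivalently, the finite covering itself corresponds to a locally constant sheaf on $U_{\CC}$, and any such sheaf has stalks of constant cardinality on a connected base. This yields the desired constancy statement and the fact that the torsion subgroups vary in a family.

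The main anticipated obstacle is the second step, namely the transition from the Zariski constructible countable union produced by Corollary \ref{cor2.1} to a genuine smooth proper map in the analytic category. One must verify that, after removing only a finite bad set in $U_{\CC}$, the chosen component $\bcZ_{i,\CC,U}$ is smooth over $U_{\CC}$ and its fibers remain discrete and finite of the prescribed torsion type; this is precisely what enables the subsequent application of Ehresmann's theorem and the local system formalism. Once this geometric setup is in place, the monodromy conclusion is a standard application of the equivalence between finite topological coverings of a connected base and finite $\pi_1(U_{\CC},b)$-sets.
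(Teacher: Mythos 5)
Your argument is essentially the paper's own: decompose $\bcZ$ via Corollary \ref{cor2.1}, pick a component $\bcZ_{i}$ dominating the connected base, shrink $U_{\CC}$ so that $\mathfrak{f}$ becomes a smooth proper map with finite fibers (torsion in the abelian variety $\Pic^0(X_{b,\CC})\cong J(X_{b,\CC})$), apply Ehresmann/covering-space theory, and conclude constancy from the monodromy-invariant subspace $V_b\subset H^1(X_{b,\CC},\QQ)$ over the connected base. The only slip is notational: the ambient local system is $R^1\pi_{*}\QQ$ for the family $\pi:X_U\to U_{\CC}$, not $R^1\mathfrak{f}_{*}\QQ$ (which vanishes since $\mathfrak{f}$ has finite fibers); otherwise the proposal matches the paper's proof, including its honest acknowledgement of the unverified smoothness/properness step.
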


We now consider a smooth projective curve $C$ over an algebraically closed field $K\subset \CC$ in the  projective plane $\PR^2$ over $K$. Let $U$ be an affine piece of $C$. That is, $U$ is $C$ minus finitely many points, viz. $P_1,\cdots,P_m$. Consider the following localization exact sequence of Picard groups
$$\oplus_i \ZZ[P_i]\to \CH^1(C)\to \CH^1(U)\to 0.$$

Fix some positive integer $n$. Then the set of all $n$-torsion points in $\CH^1(U)$ gives rise to elements of $\CH^1(C)$ of the form $nz$ such that $nz=\sum_i n_iP_i$, where $P_1,\cdots, P_m$ are the finite number of points that are deleted. As before, we consider a fibration of smooth projective schemes $X\to B$ over $\bar \QQ$, where $X$ is a surface embedded in $\PR^3$ such that each fiber $X_b$ is contained in a projective plane $\PR^2$ over $\bar \QQ$ and $B$ is an algebraic curve. Suppose that the degree of the algebraic curve $X_b$ remains constant over a Zariski open set $U$ in $B$. For an affine piece $U_b$ of the algebraic curve $X_b$, we consider the following:
$$\bcP:=\{(x,b)|x\in X_b\setminus U_b\}\to U.$$
Here $x\in X_b\setminus U_b$ are the points $P_{1b}, \cdots, P_{mb}$ such that we have 
$$nz=\sum_i P_{ib}\;.$$
After shrinking $U$, assume that $X_U\to U$ is finite of a constant degree. Then the number of points in $X_b\setminus U_b$ is finite (which vary constantly with $b$ in fibers). Therefore the number of zero cycles like above must give a finitely many torsion points of $\CH^1(U_b)$. So the torsion points on $\CH^1(U_b)$ corresponds to the complement of one point in $C^1_d(X_b)$ as $b$ varies, which describes the Zariski closed subset $\bcP$ of $X_U\times U$.

By the above assumption, this a finite-to-one map from $\bcP$ to $U$ and the degree of this map is constant. For a given $b\in U$, let us assume that the fiber $\bcP_b$ contains the points $P_{1b},\cdots,P_{mb}$. Fix a positive integer $n$ and  define the set:
$$\bcZ_d=\{(b,z)\in U\times C^1_d(X_U/U)|\Supp(z)\subset X_b, n[z]=\sum_i n_i[P_{ib}]\}\to U.$$
Then as a conseqeunce of Theorem \ref{theorem1} one gets the following result.
\begin{corollary}
$\bcZ_d$ is a countable union of Zariski closed subsets in the ambient relative Chow variety $C^1_d(X_U/U)$, where $X_U\to U$ is the pullback of the family $X\to B$ to $U$.
\end{corollary}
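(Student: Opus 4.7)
The plan is to reduce to Theorem~\ref{theorem1} by decomposing $\bcZ_d$ along the coefficient tuples and applying the Mumford--Ro\u{\i}tman machinery to each piece. First I would split
\[
\bcZ_d = \bigcup_{(n_1,\ldots,n_m)\in \ZZ^m} \bcZ_d^{(n_1,\ldots,n_m)}, \qquad \bcZ_d^{(n_1,\ldots,n_m)} := \bigl\{(b,z)\in C^1_d(X_U/U)\colon nz \sim \textstyle\sum_i n_iP_{ib}\text{ on }X_b\bigr\}.
\]
Taking degrees in $\Pic(X_b)$ forces $\sum_i n_i = nd$, so only countably many tuples contribute nontrivially. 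It therefore suffices to show that each piece $\bcZ_d^{(n_1,\ldots,n_m)}$ is a countable union of Zariski closed subsets of $C^1_d(X_U/U)$.

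For a fixed tuple, I would decompose $\sum_i n_i P_{ib} = E_b^+ - E_b^-$ into effective zero-cycles of constant degrees $d^+, d^-$ with $d^+ - d^- = nd$. Since $\bcP\to U$ is finite, the assignments $b\mapsto E_b^\pm$ assemble into regular morphisms $\sigma^\pm\colon U \to C^1_{d^\pm}(X_U/U)$, possibly after a finite \'etale cover of $U$ (harmless, as finite morphisms take countable unions of Zariski closed subsets to countable unions of Zariski closed subsets). The condition $nz \sim E_b^+ - E_b^-$ is equivalent to $nz + E_b^- \sim E_b^+$ as effective divisors of equal degree $d^+$ on $X_b$. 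Consider the regular morphism
\[
\alpha\colon C^1_d(X_U/U) \lra C^1_{d^+}(X_U/U) \times_U C^1_{d^+}(X_U/U), \qquad (b,z) \mapsto \bigl(nz+\sigma^-(b),\, \sigma^+(b)\bigr).
\]
Then $\bcZ_d^{(n_1,\ldots,n_m)} = \alpha^{-1}(W)$, where $W\subset C^1_{d^+, d^+}(X_U/U)$ is the locus of pairs rationally equivalent on $X_b$. By the very argument of Theorem~\ref{theorem1}, $W = \bigcup_{u,v} W_{d^+}^{u,v}$ with each $W_{d^+}^{u,v}$ Zariski closed, so $\alpha^{-1}(W)$ is a countable union of Zariski closed subsets; unioning over all tuples finishes the proof.

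The main obstacle is bookkeeping rather than substance. The principal technicality will be checking that the Mumford--Ro\u{\i}tman construction of $W_{d^+}^{u,v}$ in the proof of Theorem~\ref{theorem1}, stated there for the symmetric case $[D_b^+ - D_b^-] = 0$ with both parts free, also applies when one side is pinned down to a moving section $\sigma^+$ of $C^1_{d^+}(X_U/U)\to U$. The argument only uses that the reference divisor varies algebraically with $b$, which is satisfied by sections, so the translation is formal. Descent from the finite \'etale cover back to $U$ is immediate since finite morphisms are closed and preserve countable unions of Zariski closed subsets.
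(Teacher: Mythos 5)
Your proposal is correct and follows the route the paper intends: the paper offers no written proof of this corollary, merely asserting it ``as a consequence of Theorem~\ref{theorem1},'' and your argument supplies exactly the missing reduction --- splitting $\bcZ_d$ over the countably many coefficient tuples, rewriting each condition as an equality of effective cycles of equal degree, and pulling back the rational-equivalence locus $W_{d^+}=\cup_{u,v}W_{d^+}^{u,v}$ from the proof of Theorem~\ref{theorem1} along a regular morphism of Chow schemes. The bookkeeping (the \'etale cover to label the deleted points $P_{ib}$, and descent via finiteness) is handled correctly, so nothing further is needed.
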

We consider the map $\vartheta_d:C^1_d(X_U/U)\to \CH^1(X_U)$ and the union
$$\vartheta_d(\bcZ_d)=\bcZ=\cup_i \bcZ_i.$$
Consider the image of $\bcZ_d$ in $\Pic^0(X_U/U)$, under the natural morphism from $C^1_d(X_U/U)$. Continue to call it $\bcZ_d$. Consider the union $\cup_i \bcZ_i$ in $\Pic^0(X_U/U)$. Since $U$ is connected, so that there exists at least one $i$ such that $\bcZ_{i\CC}\to U_{\CC}$ is surjective. Therefore as in Theorem \ref{theorem2}, it gives rise to a local system. More precisely, for each $b\in U$ if one considers the $\pi_1(U_{\CC},b)$-module $V_b$ consisting of cohomology classes of the elements in the fibers of the map $\bcZ_{i\CC}\to U_{\CC}$ in $H^1(X_{b\CC},\QQ)$, then the  dimension of this vector space $V_b$ remains constant over $U$.  Therefore, we have the following:
\begin{corollary} \label{theorem3}
The cardinality of the set of $z$ in $\bcZ_{ib}$ for $b\in U$ such that
$$n[z]=\sum_i n_i [P_{ib}]$$
for points $P_{ib}\in X_b$ is constant as $b$ varies over $U$.
\end{corollary}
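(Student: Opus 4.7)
The plan is to transpose the monodromy argument that yielded Theorem \ref{theorem2} to the affine setting described just before the corollary, since both statements share the same underlying mechanism: a Zariski-locally closed family over $U$ whose fibers are forced to be finite by abelian variety considerations, combined with the local constancy of locally constant sheaves on connected bases.

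First I would invoke the preceding corollary to write $\bcZ=\cup_i \bcZ_i$ as a countable union of Zariski closed subsets of $C^1_d(X_U/U)$, and pass to the complexification. The map $\bcZ \to U$ is surjective, because for every $b \in U$ the localization sequence $\oplus_i \ZZ[P_{ib}] \to \Pic(X_b) \to \Pic(U_b) \to 0$ allows every $n$-torsion class in $\Pic(U_b)$ to be lifted to a class $[z] \in \Pic(X_b)$ with $nz = \sum_i n_i P_{ib}$. Since $U_\CC$ is irreducible, at least one index $i$ yields a surjection $\mathfrak{f}: \bcZ_{i\CC, U} \to U_\CC$; after deleting finitely many points from $U_\CC$ we may assume $\mathfrak{f}$ is smooth and proper in the analytic sense, hence a fibration of underlying smooth manifolds.

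The key observation is that the fibers of $\mathfrak{f}$ are finite. Indeed, any $z$ in a fiber satisfies $nz = \sum_i n_i P_{ib}$, so its image $[z]$ in $\Pic^0(X_{b\CC})$ is killed by $n$ after modification by a fixed boundary divisor; equivalently, $[z]$ lies in a translate of the $n$-torsion subgroup of the abelian variety $J(X_{b\CC})$, which is finite. Consequently $\mathfrak{f}$ is a finite covering map of connected smooth manifolds, and as in the proof of Theorem \ref{theorem2}, the cohomology classes in $H^1(X_{b\CC},\QQ)$ of the fiber elements generate a $\pi_1(U_\CC,b)$-submodule $V_b$ which, by the equivalence with locally constant sheaves, has $\QQ$-dimension independent of $b$. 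Since $\mathfrak{f}$ is in fact a covering map, its fiber cardinality is itself locally, hence globally, constant on the connected base. This is precisely the claim.

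The main obstacle is justifying that the bounded-order condition on $[z]$ gives a genuinely finite fiber of $\mathfrak{f}$, since a priori the auxiliary coefficients $n_i$ in the relation $nz = \sum_i n_i P_{ib}$ are not fixed. This is resolved by noting that once the degree $d$ of $z$ is fixed (as it is on $C^1_d(X_U/U)$), the total degree $\sum_i n_i$ is fixed to $nd$, and the classes $[P_{ib}]$ generate a finitely generated subgroup of $\Pic(X_b)$, so only finitely many tuples $(n_i)$ can arise modulo the $n$-torsion of $\Pic^0(X_{b\CC})$; the remaining finiteness then reduces, fiber by fiber, to the finiteness of $J(X_{b\CC})[n]$.
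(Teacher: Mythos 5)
Your argument follows the same route as the paper's: decompose $\bcZ=\cup_i\bcZ_i$ via the preceding corollary, pass to the complexification, pick a component $\bcZ_{i\CC}$ dominating the connected base $U_{\CC}$, shrink $U_{\CC}$ so that the map is a fibration, and read off constancy of the fiber cardinality from the resulting local system / covering space, exactly as in Theorem \ref{theorem2}. The one place where you go beyond the paper's (very terse) write-up is the justification that the fibers of $\mathfrak{f}$ are finite, and there your key claim fails: the classes $[P_{ib}]$ do generate a finitely generated subgroup of $\Pic(X_b)$, but a finitely generated subgroup of the Jacobian can be infinite (it can have positive rank), so it is not true that ``only finitely many tuples $(n_i)$ can arise modulo the $n$-torsion.'' The set of boundary classes $\sum_i n_i[P_{ib}]$ with $\sum_i n_i=nd$ is a coset of that subgroup and is in general countably infinite, hence so is the full locus of $[z]$ with $n[z]=\sum_i n_i[P_{ib}]$; your reduction ``fiber by fiber to the finiteness of $J(X_{b\CC})[n]$'' therefore does not close. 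What you actually need is not finiteness of that whole locus but generic finiteness of the single irreducible component $\bcZ_i\to U$, and this is recovered by a countability argument in the Mumford--Ro\u{\i}tman style: the fiber of $\bcZ_i$ over a very general $b$ is contained in a countable set, while a positive-dimensional complex fiber would be uncountable, so $\bcZ_i\to U$ is generically finite and becomes a finite covering after shrinking $U$. With that repair your proof coincides with the paper's.
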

These points on $\CH^1(X_b)$ are corresponding to the torsion elements in $\CH^1(U_b)$, where $U_b$ is the open complement of $X_b$ obtained from $X_b$ by  deleting the points $P_{1b},\cdots, P_{mb}$.

\section{Torsion in the Picard group of algebraic surfaces}

Throughout the section we fix a positive integer $n$. We begin with a certain algebraic surface having an $n$-torsion element in the Weil divisor class group. Consider the algebraic surface defined by
$$y^2=t^2q^2-z^n$$
over $\QQ$. Its co-ordinate ring is given by
$$\QQ[y,t,z]/(y^2-t^2q^2+z^n).$$
We now consider the maximal ideal $(t-m,z-\ell)$, for some  algebraic  numbers $m,\ell$,  in the polynomial ring $\QQ[t,z]$. We also consider the map $$\QQ[t,z]\to \QQ[y,t,z]/(y^2-t^2q^2+z^n)$$
defined by
$$t\mapsto t, z\mapsto z$$
and the map $\QQ[t,z]\to \QQ$, which is given by
$$f(t,z)\mapsto f(m,\ell).$$
Then the tensor product
$$ \QQ[y,t,z]/(y^2-t^2q^2+z^n)\otimes _{\QQ[t,z]}\QQ$$
is given by $ \QQ[y]/(y^2-m^2q^2+\ell^n)$. Further, if the polynomial $p(y):=y^2-m^2q^2+\ell^n$ is irreducible over $\QQ$, then the above co-ordinate ring is isomorphic to $L$, where $L$ is the imaginary quadratic extension of $\QQ$ given by adjoining a root of $p(y)$. Therefore, if we consider the family $$\ZZ[y,t,z]/(y^2-t^2q^2+z^n)\to \ZZ[t,z],$$
then the normalizations of the fibers are the ring of integers of
$\QQ(\sqrt {m^2q^2-\ell^n})\;.$
These normalizations form a family in the following way.

Let us consider the normalization $\wt{S}$ of $S$, that is, the spectrum of the integral closure of the co-ordinate ring of $S$. If we spread out $\wt{S}$ over $\Spec(\ZZ)$, then it's co-ordinate ring is isomorphic to
$\ZZ[z,t](\alpha)$,
where $\alpha$ is a root of
$y^2-t^2q^2+z^n$.
Therefore when we consider the fibers of this spread $\wt{S}_{\ZZ}$ over $t=m,z=\ell$, then we get the spectrum of the ring
$$\ZZ[y](\sqrt{m^2q^2-\ell^n})\;.$$
This is the ring  of integers of $\QQ(\sqrt{m^2q^2-\ell^n})$. Denote this family by $\wt{S}_{\ZZ}$, and  consider the Zariski closure of $\wt{S}$ (the generic fiber of the above mentioned family) in $\PR^3_{\QQ}$ and the Zariski closure of the family $\wt{S_{\ZZ}}\to \AF^2_{\ZZ}$ in $\PR^3_{\ZZ}$. We denote it by $\bar {\wt{S}_{\ZZ}}$. We also consider the Chow variety
$C^1_d(\bar{\wt{S}_{\ZZ}}/\PR^2_{\ZZ})$
and the subset of $C^1_d(\bar{\wt{S}_{\ZZ}}/\PR^2_{\ZZ})\times _{\ZZ}\PR^2_{\ZZ}$
$$\bcZ_d:=\{(z,b)|\Supp(z)\subset \bar{\wt{S}_{\ZZ,b}}, [z]=\sum_i n_i[P_{ib}]\in \CH^1(\bar{\wt{S}_{\ZZ,b}})\},$$
where $P_{1b},\cdots,P_{mb}$ are the points in the complement of $\wt{S_{\ZZ,b}}$ inside the Zariski closure $\bar {\wt{S}_{\ZZ,b}}$. Then by Theorem \ref{theorem1}, we get the following result\footnote{Here, the notion of Chow group of co-dimension one subvarieties of an arithmetic variety is in the sense of \cite{GS}(section 3.3, subsection 3.3.1)}.
\begin{proposition}
The set $\bcZ_d$ is a countable union of Zariski closed subsets in the Chow variety.
\end{proposition}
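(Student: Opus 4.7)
The strategy is to reduce the proposition directly to Theorem \ref{theorem1}. The condition $[z] = \sum_i n_i [P_{ib}]$ in $\Pic(\bar S_{\ZZ,b})$ is equivalent to the divisor $z - \sum_i n_i P_{ib}$ being rationally equivalent to zero on the projective fibre $\bar S_{\ZZ,b}$. Splitting the coefficients by sign, I would write $\sum_i n_i P_{ib} = P^+_b - P^-_b$ as a difference of effective $0$-cycles, so that the defining condition of $\bcZ_d$ translates into the rational equivalence $z + P^-_b \sim P^+_b$ on $\bar S_{\ZZ,b}$.

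Next, on a Zariski open subscheme $U \subset \PR^2_{\ZZ}$ over which the degrees $d^\pm := \deg(P^\pm_b)$ are constant, I would build a regular morphism
$$\Psi \colon C^1_d(\bar S_{\ZZ,U}/U) \lra C^1_{d+d^-}(\bar S_{\ZZ,U}/U) \times_U C^1_{d^+}(\bar S_{\ZZ,U}/U), \quad (b,z) \longmapsto (b,\, z+P^-_b,\, P^+_b),$$
and identify $\bcZ_d \cap C^1_d(\bar S_{\ZZ,U}/U)$ with the $\Psi$-preimage of the locus where the two Chow-valued coordinates are rationally equivalent on their common fibre. This rational-equivalence locus is precisely the subscheme that is shown in the proof of Theorem \ref{theorem1} to be a countable union of Zariski closed subsets, via the stratification $W_d = \bigcup_{u,v} W^{u,v}_d$ and the auxiliary cycles $C, D$ (the argument there is written for two components of the same degree, but the generalization to unequal degrees is a purely notational modification). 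Pulling back by the regular morphism $\Psi$, and then taking the union over the finitely many locally closed strata of $\PR^2_{\ZZ}$ on which $d^\pm$ are constant, yields the stated conclusion.

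The step demanding most care is the construction of the relative family of boundary cycles. Since $S$ is affine and $\bar S_{\ZZ} \subset \PR^3_{\ZZ}$ is its projective closure, the complement $\bar S_{\ZZ} \setminus S_{\ZZ}$ is the intersection with the hyperplane at infinity, and the fibrewise boundary $\bar S_{\ZZ,b} \setminus S_{\ZZ,b}$ is obtained by further intersecting with the line of $\PR^3_{\ZZ}$ above $b$. One must check, using flatness and upper semicontinuity of fibre degree, that this defines a finite flat family of effective $0$-cycles over a dense open $U \subset \PR^2_{\ZZ}$, so that $b \mapsto P^\pm_b$ is a bona fide section of the relative Chow scheme. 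Once this algebro-geometric input is in place, the remainder of the proof is a formal application of Theorem \ref{theorem1}.
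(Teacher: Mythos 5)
Your proposal is correct and follows essentially the same route as the paper, which for this proposition simply invokes Theorem \ref{theorem1} without spelling out the reduction; your write-up in fact supplies more detail (the sign-splitting of $\sum_i n_i P_{ib}$, the morphism $\Psi$, and the flatness of the boundary family) than the paper itself does. The only points worth adding are that the integers $n_i$ are existentially quantified, so one must also take the countable union over all integer tuples $(n_i)$, and that monodromy may permute the boundary points over $U$, so the sections $b\mapsto P_b^{\pm}$ are only well defined after passing to the finitely many labellings (or by replacing them with the closed locus of effective cycles supported on the boundary family); neither affects the conclusion.
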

This will follow if we can prove the analog of theorem \ref{theorem1} for arithmetic varieties, which is the following:
\begin{theorem} The set
$$\bcZ_d:=\{(z,b)|\Supp(z)\subset \bar{\wt{S}_{\ZZ,b}}, [z]=0\in \CH^1(\bar{\wt{S}_{\ZZ,b}})$$ is a countable union of Zariski closed subschemes in Chow variety.
\end{theorem}
\begin{proof}
The proof goes along the line of \ref{theorem1}. The only points to be noted here are:
\begin{itemize}
\item[(I)] The notion of Hilbert scheme and the Hom-scheme make sense for an arithmetic variety. This is as explained in \cite[Chapter: Hilbert schemes and Quot schemes, \S 5]{FGA}.

\item[(II)] The family of Weil divisors of a smooth fibration over $\Spec(\ZZ)$ are parametrized by a Chow variety which is actually given by the Picard scheme parametrizing relative Cartier divisors of the same family \cite[Corollary 11.8]{Ry}. In our case the family is $\wt{S}_{\ZZ}$ which is of finite presentation over $\ZZ$ and it is a standard smooth algebra\footnote{In the sense,  \cite[\href{https://stacks.math.columbia.edu/tag/01UM}{Sections 01UM} \& \href{https://stacks.math.columbia.edu/tag/00SK}{00SK}]{stacks-project}, see Definitions 10.136.6 \& 29.32.1.} over $\ZZ$. This enables us to formulate the definition of rational equivalence for arithmetic varieties as in \cite[\S 3.3]{GS} in the way: two Weil divisors $D_1,D_2$ are rationally equivalent if there exists a morphism $f: \PR^1_{\ZZ}\to C^1_{d,d}(\bar{\wt{S}_{\ZZ}}/\PR^2_{\ZZ})$ such that
$f(0)=D_1+B$ and $ f(\infty)=D_2+B$,
where $B$ is a positive Weil divisor and $0,\infty$ are two fixed points on $\PR^1_{\ZZ}$.
\end{itemize}
The rest of the proof goes mutatis mutandis as in \ref{theorem1}. Although it is similar we present it for the convenience of the reader.

Assume that the relation $D_b=D_b^+-D_b^-$ is rationally equivalent to zero. This means that there exists a map $f:\PR^1\to C^1_{d,d}(\bar{\wt{S}_{\ZZ,b}})$ such that
$$f(0)=D_b^{+}+\gamma\text{ and }f(\infty)=D_b^{-}+\gamma,$$
where $\gamma$ is a positive divisor on $\bar{\wt{S}_{\ZZ,b}}$.
In other words, we have the following map: $$\ev:Hom^v(\PR^1_{\ZZ},C^1_{d}(\bar{\wt{S}_{\ZZ}}/\PR^2_{\ZZ}))\to C^1_{d}(\bar{\wt{S}_{\ZZ}}/\PR^2_{\ZZ})\times C^1_{d}(\bar{\wt{S}_{\ZZ}}/\PR^2_{\ZZ}) ,$$ given by $f\mapsto (f(0),f(\infty))$ and image of $f$ is contained in $C^1_{d,d}(\bar{\wt{S}_{\ZZ,b}})$.

Let us denote $C^1_{d}(\bar{\wt{S}_{\ZZ}}/\PR^2_{\ZZ})$ by $C^1_d(\bar{\wt{S}}_{\ZZ})$ for simplicity.
We now consider the subscheme $U_{v,d}(\bar{\wt{S}_{\ZZ}})$ of $\PR^2_{\ZZ}\times \Hom^v(\PR^1_k,C^1_{d}(\bar{\wt{S}_{\ZZ}}))$ consisting of the pairs $(b,f)$ such that image of $f$ is contained in $C^1_{d}(\bar{\wt{S}_{\ZZ,b}})$ (such a universal family exists, for example see \cite[Theorem 1.4]{Ko} or \cite{FGA}, chapter on Hilbert schemes and Quot schemes). This gives a morphism from $U_{v,d}(\bar{\wt{S}_{\ZZ}})$ to
$$\PR^2_{\ZZ}\times C^1_{d,d}(\bar{\wt{S}_{\ZZ,b}})$$
 defined by $$(b,f)\mapsto (b,f(0),f(\infty)).$$
Again, we consider the closed subscheme $\bcV_{d,d}$ of $\PR^2_{\ZZ}\times C^1_{d,d}(\bar{\wt{S}_{\ZZ}})$ given by $(b,z_1,z_2)$, where $(z_1,z_2)\in C^1_{d,d}(\bar{\wt{S}_{\ZZ,b}})$. Suppose that the map from $\bcV_{d,u,d,u}$ to $\bcV_{d+u,u,d+u,u}$ is given by
$$(A,C,B,D)\mapsto (A+C,C,B+D,D).$$
Then one writes the fiber product $\bcV$ of $U_{v,d}(\bar{\wt{S}_{\ZZ}})$ and $\bcV_{d,u,d,u}$ over $\bcV_{d+u,u,d+u,u}$. If we consider the projection from $\bcV$ to $\PR^2_{\ZZ}\times C^1_{d,d}(\bar{\wt{S}_{\ZZ}})$, then we observe that $A$ and $B$ are supported as well as rationally equivalent on $\bar{\wt{S}_{\ZZ,b}}$. Conversely, if $A$ and $B$ are supported as well as rationally equivalent on $\bar{\wt{S}_{\ZZ,b}}$, then one gets the map $$f:\PR^1_{\ZZ}\to C^1_{d+u,u,d+u,u}(\bar{\wt{S}_{\ZZ,b}})$$ of some degree $v$ satisfying
$$f(0)=(A+C,C)\text{ and } f(\infty)=(B+D,D),$$
where $C$ and $D$ are supported on $\bar{\wt{S}_{\ZZ,b}}$. This implies that the image of the projection from $\bcV$ to $\PR^2_{\ZZ}\times C^1_{d,d}(\bar{\wt{S}_{\ZZ}})$ is a quasi-projective subscheme $W_{d}^{u,v}$ consisting of the tuples $(b,A,B)$ such that $A$ and $B$ are supported on $\bar{\wt{S}_{\ZZ,b}}$, and that there exists a map $$f:\PR^1_{\ZZ}\to C^1_{d+u,u}(\bar{\wt{S}_{\ZZ,b}})$$ such that $f(0)=(A+C,C)$ and $f(\infty)=(B+D,D)$. Here $f$ is of degree $v$, and $C,D$ are supported on $\bar{\wt{S}_{\ZZ,b}}$ and they are of co-dimension one and of degree $u$ cycles. This shows that $W_d=\cup_{u,v} W_d^{u,v}$. We now prove that the Zariski closure of $W_d^{u,v}$ is in $W_d$ for each $u$ and $v$. For this, we prove the following:
$$W_d^{u,v}=pr_{1,2}(\wt{s}^{-1}(W^{0,v}_{d+u}\times W^{0,v}_u)),$$
where
$$\wt{s}: \PR^2_{\ZZ}\times C^1_{d,d,u,u}(\bar{\wt{S}_{\ZZ}})\to \PR^2_{\ZZ}\times C^1_{d+u,d+u,u,u}(\bar{\wt{S}_{\ZZ}})$$
defined by
$$\wt{s}(b,A,B,C,D)=(b,A+C,B+D,C,D).$$

We assume $(b,A,B,C,D)\in \PR^2_{\ZZ}\times C^1_{d,d,u,u}(\bar{\wt{S}_{\ZZ}})$ in such a way that $\wt{s}(b,A,B,C,D)\in W^{0,v}_{d+u}\times W^{0,v}_u$. This implies that there exists an element $(b,g)\in \PR^2_{\ZZ}\times\Hom^v(\PR^1_k,C^p_{d+u}(\bar{\wt{S}_{\ZZ}}))$ and an element $(b,h)\in \Hom^v(\PR^1_{\ZZ},C^p_{u}(\bar{\wt{S}_{\ZZ}}))$ satisfying $$g(0)=A+C,~g(\infty)=B+D \text{ and } h(0)=C,h(\infty)=D$$ as well as the image of $g$ and $h$ are contained in $C^1_{d+u}(\bar{\wt{S}_{\ZZ,b}})$ and  $C^1_u(\bar{\wt{S}_{\ZZ,b}})$ respectively.

Also if $f=g\times h$ then $f\in \Hom^v(\PR^1_{\ZZ},C^p_{d+u,u}(\bar{\wt{S}_{\ZZ}}))$ such that the image of $f$ is contained in $C^1_{d+u,u}(\bar{\wt{S}_{\ZZ,b}})$ as well as it satisfies the following:
$$f(0)=(A+C,C)\text{ and }(f(\infty))=(B+D,D).$$
This shows that $(b,A,B)\in W^d_{u,v}$.

On the other hand, we assume that $(b,A,B)\in W^d_{u,v}$. Then there exists $f\in \Hom^v(\PR^1_k,C^1_{d+u,u}(\bar{\wt{S}_{\ZZ,b}}))$ such that
$$f(0)=(A+C,C)\text{ and }f(\infty)=(B+D,D),$$
and image of $f$ is contained in the Chow scheme of  $\bar{\wt{S}_{\ZZ,b}}$.

We now compose $f$ with the projections to $C^1_{d+u}(\bar{\wt{S}_{\ZZ,b}})$ and to $C^1_{u}(\bar{\wt{S}_{\ZZ,b}})$ to get a map $g\in \Hom^v(\PR^1_{\ZZ},C^1_{d+u}(\bar{\wt{S}_{\ZZ}}))$ and a map $h\in\Hom^v(\PR^1_{\ZZ},C^1_{u}(\bar{\wt{S}_{\ZZ}}))$ satisfying
$$g(0)=A+C,~g(\infty)=B+D$$
and
$$h(0)=C,~h(\infty)=D.$$
Also, the images of $g$ and $h$ are contained in the respective Chow schemes of   the fibers $\bar{\wt{S}_{\ZZ,b}}$. Therefore, we have
$$W_d=pr_{1,2}(\wt{s}^{-1}(W_{d+u}\times W_u)).$$

We are now in a position to prove that the closure of $W_d^{0,v} $ is contained in $W_d$. Let $(b,A,B)$ be a closed point in the closure of ${W_d^{0,v}}$. Let $W$ be an irreducible component of ${W_d^{0,v}}$ whose closure contains $(b,A,B)$. Assume that $U$ is an affine neighborhood of $(b,A,B)$ such that $U\cap W$ is non-empty. Then there is an irreducible curve $C$ in $U$ passing through $(b,A,B)$. Suppose that $\bar{C}$ is the Zariski closure of $C$ in $\overline{W}$. The map
$$e:U_{v,d}(\bar{\wt{S}_{\ZZ}})\subset \PR^2_{\ZZ}\times \Hom^v(\PR^1_{\ZZ},C^1_{d}(\bar{\wt{S}_{\ZZ}}))\to C^1_{d,d}(\bar{\wt{S}_{\ZZ}})$$
given by
$$(b,f)\mapsto (b,f(0),f(\infty))$$
is regular and $W_d^{0,v}$ is its image. We now choose a curve $T$ in $U_{v,d}(X)$ such that the closure of $e(T)$ is $\bar C$.  Let $\wt{T}$ be denote the normalization of the Zariski closure of $T$, and $\wt{T_0}$ the pre-image of $T$ in this normalization. Then the regular morphism $\wt{T_0}\to T\to \bar C$ extends to a regular morphism $\wt{T}_{\QQ}\to \bar C_{\QQ}$ when we consider all the above varieties scalar extended by the field of all algebraic numbers, $\bar\QQ$. If $(b_{\QQ},f_{\QQ})$ is a pre-image of $(b_{\QQ},A_{\QQ},B_{\QQ})$, then
$$f_{\QQ}(0)=A_{\QQ}, \quad f_{\QQ}(\infty)=B_{\QQ}$$ and the image of $f_{\QQ}$ is contained in $C^p_{d}(\bar{\wt{S}})_{\QQ}$. 
Spreading out $f_{\QQ}$ over $\bar\ZZ$, we have an $f$ such that $$f(0)=A, \quad f(\infty)=B\;.$$
This is because there is a one to one correspondence between $\bar\ZZ$ (the ring of all algebraic integers) points of arithmetic varieties and $\bar\QQ$ points of the corresponding variety over $\bar\QQ$. Therefore, $A$ and $B$ are  rationally equivalent. This completes the proof.
\end{proof}

Applying the same argument as in Corollary \ref{theorem3}, we see that there exists an irreducible Zariski closed subset $\bcZ_i$ inside the  Picard scheme $\Pic(\bar{\wt{S}_{\QQ}})_V$ (this again uses the representability of family of Weil divisors in terms of relative Cartier divisors \cite{Ry}[corollary 11.8]), where $V$ is Zariski open in $\AF^2_{\QQ}$ such that the complexification of $\bcZ_{i,\CC}$ maps dominantly onto $V_{\CC}$ as well as the number of points in the fiber of this map is constant and they are the $n$-torsions. Note that the $\CH^1(\bar {\wt{S}_{\QQ}})$ is the co-limit of the groups $\CH^1(\bar{\wt{S}_{U}})$, where $U$ varies in $\AF^2_{\ZZ}$. Consider the spread $\widetilde{\bcZ_i}$ of $\bcZ_{i}$ in $\Pic(\bar {\wt{S}_U})$ and the spread of $V$ in $\AF^2_{\ZZ}$ denoted as $\bcV$. Since the map
$\bcZ_{i,\CC}\to V_{\CC}$
is dominant, so that
$\widetilde{\bcZ_{i,\CC}}\to \bcV_{\CC}$
is dominant too. Therefore applying the arguments as in \S\ref{section monodromy} one gets the following:
\begin{theorem}\label{thm3.5}
There exists a Zariski open $U$ in $\AF^2_{\ZZ}$, such that the cardinality of a certain subgroup of $n$-torsions of $\CH^1(\wt{S_{\ZZ,b}})$ remains constant as $b$ varies over $U$. However, the group $\CH^1(\wt{S_{\ZZ,b}})$ is nothing but the class group of the quadratic field $\QQ(\sqrt{m^2q^2-\ell^n})$ for some fixed integers $m$ and $\ell$.
\end{theorem}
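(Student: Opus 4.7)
The plan is to apply the monodromy/local-system machinery of Sections~2 and~3 to the family $\bar S_\ZZ \to \PR^2_\ZZ$ obtained from the proposition just proved, and then to identify the relevant Picard group of the fiber with the ideal class group of the stated quadratic field via standard Dedekind theory.

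I would begin with the countable decomposition $\bcZ_d \subset C^1_d(\bar S_\ZZ/\PR^2_\ZZ)$ supplied by the preceding proposition, and extract an irreducible component $\bcZ_i$ whose image in $\AF^2_\ZZ$ dominates a Zariski open $U$. Such a component exists because the ambient base is irreducible and the whole $\bcZ$ is required to surject onto $U$ by construction, exactly as in the argument leading to Corollary~\ref{theorem3}. Complexifying and shrinking $U$ to remove the discriminant locus of $y^2-t^2q^2+z^n$ (viewed as a polynomial in $y$) as well as any further locus where the natural map fails to be smooth and proper, I put the family in the setup required by the proof of Theorem~\ref{theorem2}. Finiteness of the fibers is automatic because each element satisfies a relation $nz=\sum_j n_j P_{jb}$, whose image in $\Pic^0(\bar S_{\ZZ,b,\CC})$ is a torsion class in the abelian variety $J(\bar S_{\ZZ,b,\CC})$.

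Next, Ehresmann's theorem makes $\bcZ_{i,\CC}\to U_\CC$ a topological fibration with finite fibers. Associating to each fiber point its cohomology class in $H^1(\bar S_{\ZZ,b,\CC},\QQ)$ and taking $\QQ$-spans, I obtain vector spaces $V_b$ which assemble into a locally constant $\QQ$-subsheaf of $R^1\mathfrak{f}_*\QQ$ on $U_\CC$; equivalently, a $\pi_1(U_\CC,b)$-subrepresentation of $H^1(\bar S_{\ZZ,b,\CC},\QQ)$. Connectedness of $U$ then forces $\dim_\QQ V_b$, and consequently the cardinality of the finite fiber over $b$, to be constant in $b$. This produces the subgroup of $\Pic(S_{\ZZ,b})$ of constant cardinality asserted in the first sentence of the theorem.

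For the second sentence, the fiber $S_{\ZZ,b}$ over the maximal ideal $(t-m,z-\ell)\subset\ZZ[t,z]$ has coordinate ring $R_{m,\ell}:=\ZZ[y]/(y^2-m^2q^2+\ell^n)$, as computed in the paragraph immediately preceding the theorem; when $m^2q^2-\ell^n$ is square-free of the appropriate form, $R_{m,\ell}$ is the full ring of integers $\bcO_K$ of $K=\QQ(\sqrt{m^2q^2-\ell^n})$, and the standard identification $\Pic(\Spec\bcO_K)\cong \mathrm{Cl}(K)$ closes the argument. The hardest part will be verifying that $R_{m,\ell}$ really is $\bcO_K$, rather than a proper order, for a Zariski dense family of $(m,\ell)\in U$; if it is only an order, one must correct by the conductor ideal to land in $\mathrm{Cl}(K)$. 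A secondary subtlety is that the monodromy argument controls only the subgroup spanned by image classes of $\bcZ_i$, not all of $\Pic(S_{\ZZ,b})$, which is why the statement speaks of \emph{a certain subgroup}.
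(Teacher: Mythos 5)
Your proposal follows essentially the same route as the paper, which derives Theorem \ref{thm3.5} directly from the preceding proposition together with the monodromy/local-system argument of Sections 2 and 3, and then identifies the fiber $S_{\ZZ,b}$ with the spectrum of $\ZZ[y]/(y^2-m^2q^2+\ell^n)$ so that its Picard group is the class group. The two caveats you flag --- that $\ZZ[\sqrt{m^2q^2-\ell^n}]$ may be a non-maximal order (e.g.\ when the square-free part is $\equiv 1 \pmod 4$), and that the argument only controls the subgroup generated by classes coming from $\bcZ_i$ --- are genuine points that the paper passes over without comment, so your version is if anything more careful than the original.
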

This concludes that given an element of order $n$ in $\CH^1(\wt{S_{\ZZ,b}})$, for $b$ in $U$, one can find an element of the same order in $\CH^1(\wt{S_{\ZZ,b'}})$ for some $b'\in U$ which is different from $b$.

\section{Class group of $\mathbb{Q}(\sqrt{k^2-p^n})$}
In this section, we construct a subgroup in the class group of $\QQ(\sqrt{m^2q^2-\ell^2})$ which is isomorphic to a finite group of order $n$. More precisely, we prove:
\begin{thm}\label{T1}
Let $k\geq 1$ be an integer, $n\geq 3$ an odd integer and $p$ an odd prime such that $\gcd(k, p)=1$ and $k^2<p^n$. Let $-d~(<-3)$ be the square-free part of $k^2-p^n$. Then the class group of $K_{k,p,n}=\mathbb{Q}(\sqrt{-d})$ has an element of order $n$ if the following conditions hold:
\begin{itemize}
\item[(i)] $k\not\equiv \pm 1\pmod d$.
\item[(ii)] $t^q \not \equiv \pm k \pmod {d}$ for any proper divisor $t$ of $k$ and any prime divisor $q$ of $n$.
\item[(iii)] $p^{n/3}\neq (k'^3+2k)/3$ when $d \equiv 3 \pmod 4$, $3\mid n$ and $k'\mid k$ with $k'$ an odd positive integer.
\end{itemize}
\end{thm}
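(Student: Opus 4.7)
The plan is to construct an explicit ideal class of order $n$. Let $r \in \mathbb{Z}_{>0}$ satisfy $p^n - k^2 = d r^2$, and set $\alpha = k + r\sqrt{-d} \in \mathcal{O}_K$; then $N(\alpha) = k^2 + r^2 d = p^n$. I first show $p$ splits in $\mathcal{O}_K$: from $k^2 \equiv -dr^2 \pmod p$ together with $\gcd(k,p)=1$ one obtains $p \nmid r$ and that $-d$ is a nonzero square modulo $p$, hence $(p) = \mathfrak{p}\bar{\mathfrak{p}}$ with $\mathfrak{p} \neq \bar{\mathfrak{p}}$. The ideals $\mathfrak{p}$ and $\bar{\mathfrak{p}}$ cannot both divide $\alpha$, for otherwise $\mathfrak{p}$ would divide $\alpha + \bar\alpha = 2k$ and $\alpha - \bar\alpha = 2r\sqrt{-d}$, forcing $p \mid k$ (using that $p$ is odd and $p \nmid d$). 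Hence, relabeling if necessary, $(\alpha) = \mathfrak{p}^n$, so the order $e$ of $[\mathfrak{p}]$ in the class group of $K$ divides $n$.

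To conclude $e = n$, I argue by contradiction. Suppose $e$ is a proper divisor of $n$, so $e \mid n/q$ for some prime $q \mid n$ and $\mathfrak{p}^{n/q} = (\beta)$ for some $\beta \in \mathcal{O}_K$ with $N(\beta) = p^{n/q}$. Since $-d < -3$, the units of $\mathcal{O}_K$ are $\pm 1$, and $(\alpha) = (\beta)^q$ gives $\alpha = \pm\beta^q$. Write $\beta = a + b\sqrt{-d}$ when $d \equiv 1, 2 \pmod 4$, and $\beta = (a + b\sqrt{-d})/2$ with $a \equiv b \pmod 2$ when $d \equiv 3 \pmod 4$; the norm relation becomes $a^2 + b^2 d = p^{n/q}$ or $4 p^{n/q}$ accordingly. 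Applying Newton's identity to $\beta^q + \bar\beta^q = \alpha + \bar\alpha = 2k$ expresses $2k$ as a polynomial in $s := \beta + \bar\beta$ and $\beta\bar\beta = p^{n/q}$, namely $2k = s\bigl(s^{q-1} - q\, p^{n/q} s^{q-3} + \cdots\bigr)$. This shows that $s$, and hence $a$ (up to a power of $2$ that is invertible modulo the odd integer $d$), divides $2k$, so $a \mid k$. Reducing the same identity modulo $d$, and using $a^2 \equiv (\text{constant})\cdot p^{n/q} \pmod d$ obtained from the norm relation, the identity collapses to $a^q \equiv \pm c\,k \pmod d$ for an explicit power of $2$ constant $c$.

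The proof ends with a case analysis on $|a|$. If $|a| = 1$, the congruence yields $k \equiv \pm 1 \pmod d$ (after absorbing the power of $2$), contradicting hypothesis~(i). If $1 < |a| < k$, then $|a|$ is a proper divisor of $k$ satisfying $|a|^q \equiv \pm k \pmod d$, contradicting hypothesis~(ii). Hence $|a| = k$, or more generally $|a| = k'$ for a specific odd divisor $k' \mid k$ permitted by the parity constraints in the half-integer case. Substituting $a = \pm k'$ back into the Newton identity for $q = 3$ and eliminating $b^2 d$ through the norm relation yields a closed-form expression for $p^{n/3}$; in the delicate case $d \equiv 3 \pmod 4$ this reduces to the relation $p^{n/3} = (k'^3 + 2k)/3$, which is precisely the equation excluded by hypothesis~(iii). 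For primes $q \geq 5$, the analogous computation produces polynomial identities in $k$ and $p^{n/q}$ with no positive integer solutions, yielding a contradiction without further hypotheses. The principal technical obstacle is the careful bookkeeping of the $2$-adic factors in the half-integer case $d \equiv 3 \pmod 4$, where the Newton identity carries an overall factor of $1/8$; isolating exactly the resulting cubic identity that is ruled out by hypothesis~(iii) is the crux of the argument.
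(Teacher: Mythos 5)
Your overall strategy coincides with the paper's: construct $\alpha=k+m\sqrt{-d}$ of norm $p^n$, factor $(\alpha)=\mathfrak{a}^n$, suppose the class of $\mathfrak{a}$ has order a proper divisor of $n$ to obtain $\alpha=\beta^q$ for a prime $q\mid n$, compare real parts to force $a\mid k$ for the rational part $a$ of $\beta$, and run a case analysis on $a$ against hypotheses (i)--(iii). However, there is a genuine gap at the decisive case $a=\pm k$ (in the integral case $d\equiv 1,2\pmod 4$, and its analogue for $q\geq 5$ in the half-integral case). You assert that for $q\geq 5$ ``the analogous computation produces polynomial identities in $k$ and $p^{n/q}$ with no positive integer solutions.'' That is not a computation one can carry out: taking norms in $\alpha=\beta^q$ with $a=\pm k$ yields $db^2+k^2=p^{n/q}$, which together with $dm^2+k^2=p^n$ says precisely that the generalized Ramanujan--Nagell equation $dx^2+k^2=p^y$ has two distinct positive integer solutions. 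Excluding this is the hardest step of the whole proof, and the paper does it by invoking the Bugeaud--Shorey classification of equations $D_1x^2+D_2=\lambda^2p^y$ with more than one solution (supplemented by Ljunggren's theorem on $(y^n-1)/(y-1)=x^2$ and Cohn's theorem on squares in the Lucas sequence to dispose of the exceptional families). No elementary polynomial identity replaces this input.

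A second, smaller omission: in the half-integral case $d\equiv 3\pmod 4$ with $a,b$ both odd, you simply ``substitute $a=\pm k'$ into the Newton identity for $q=3$,'' but you never justify why only $q=3$ need be considered there. The paper first reads the norm relation $a^2+b^2d=4p^{n/q}$ modulo $8$ to force $d\equiv 3\pmod 8$, and then applies the proposition of Chakraborty--Hoque--Kishi--Pandey stating that for $-d\equiv 5\pmod 8$ and $a,b$ odd, $\bigl((a+b\sqrt{-d})/2\bigr)^q$ lies in $\mathbb{Z}[\sqrt{-d}]$ only when $q=3$; since $\alpha\in\mathbb{Z}[\sqrt{-d}]$, this pins down $q=3$ and makes hypothesis (iii) exactly the right exclusion. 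Without these two external inputs (Bugeaud--Shorey and the CHKP proposition), your case analysis does not close.
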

\begin{remark}
By putting $k=q$ an odd prime in Theorem \ref{T1}, we obtain \cite[Theorem 1.1]{CHKP}.
\end{remark}
\begin{remark}
Theorem \ref{T1} also gives \cite[Theorem 4.1]{CHKP} when $k=1$.
\end{remark}
We begin with the following crucial proposition on the solutions in positive integers $x$ and $n$ of the following equation,
\begin{equation}\label{de1}
dx^2+k^2=p^n,
\end{equation}
where $d\geq 2$ and $ k\geq 1$ are fixed integers such that $\gcd(k, d)=1$ and $p$ is odd prime number.
\begin{prop}\label{PB} Let $k, n, p$ and $d$ be as in Theorem \ref{T1}. Then
the equation \eqref{de1} has at most one solution in positive integer $x$ and $n$, except $(x, n)=(11, 5)$ which occurs only when $d=2$ and $k=1$.
\end{prop}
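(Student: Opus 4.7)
The strategy is to transport the equation $dx^2+k^2=p^n$ to the imaginary quadratic field $K=\QQ(\sqrt{-d})$ and reduce the uniqueness statement to the Bilu--Hanrot--Voutier theorem on primitive divisors of Lucas and Lehmer sequences.

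First, after a finite $p$-adic descent in $k$ I may assume $p\nmid k$. Together with $\gcd(k,d)=1$ this forces $p\nmid d$, so $-d$ is a nonzero square modulo $p$ and $p$ splits in $\mathcal{O}_K$ as $(p)=\mathfrak{p}\bar{\mathfrak{p}}$. The ideals $(k+x\sqrt{-d})$ and $(k-x\sqrt{-d})$ have sum dividing $(2k)$ and are therefore coprime in $\mathcal{O}_K$; since their product is $(p)^n$, each must be a pure power of one of the primes above $p$. After possibly swapping $\mathfrak{p}$ with $\bar{\mathfrak{p}}$, this gives $(k+x\sqrt{-d})=\mathfrak{p}^n$ for every solution.

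Assume now two distinct solutions $(x_1,n_1)$ and $(x_2,n_2)$ with $n_1\le n_2$. Let $h$ be the order of $[\mathfrak{p}]$ in the ideal class group of $K$; then $h\mid n_i$. Fix $\gamma\in\mathcal{O}_K$ with $\mathfrak{p}^h=(\gamma)$ and set $m_i=n_i/h$. Then there exist units $\varepsilon_i\in\mathcal{O}_K^{\times}$ with
\[
k+x_i\sqrt{-d}=\varepsilon_i\,\gamma^{m_i}.
\]
For $d\ne 1,3$ the unit group is $\{\pm 1\}$; the case $d=3$ contributes six extra choices for $\varepsilon_i$ which I would enumerate directly. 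Writing $\gamma^m=\tfrac{V_m}{2}+b\,U_m\sqrt{-d}$, where $U_m,V_m$ are the Lucas sequences attached to $(\gamma,\bar\gamma)$ (Lehmer sequences when $d\equiv 3\pmod 4$), the real and imaginary parts of the above identity yield $|V_{m_i}|=2k$ and $|x_i|=|b|\,|U_{m_i}|$. In particular $V_{m_1}=\pm V_{m_2}$ with $m_1<m_2$.

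A primitive prime divisor $q$ of $U_{m_2}$, provided by Bilu--Hanrot--Voutier for $m_2>30$ and for smaller $m_2$ outside an explicit finite table of exceptional pairs, divides $\gamma^{m_2}-\bar\gamma^{m_2}$ but neither $\gamma^{m_1}-\bar\gamma^{m_1}$ nor $\gamma-\bar\gamma$. The norm identity $V_m^2+4db^2U_m^2=4p^{hm}$ combined with $|V_{m_1}|=|V_{m_2}|$ then forces $q\mid p$, contradicting primitivity. The main obstacle is the resulting finite sweep through the BHV exceptional table: for each such pair one must check whether $\gamma\bar\gamma$ is an odd prime power and whether $|V_{m_1}|=|V_{m_2}|$ can occur for some $m_1<m_2$. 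A direct case-by-case inspection leaves only $\gamma=1-\sqrt{-2}$, $h=1$, $(m_1,m_2)=(1,5)$: here $\gamma^{5}=1-11\sqrt{-2}$, producing exactly the sporadic solutions $(x,n)=(1,1)$ and $(11,5)$ with $d=2$, $k=1$, $p=3$.
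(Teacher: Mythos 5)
Your route is genuinely different from the paper's: the paper obtains the proposition by quoting the Bugeaud--Shorey theorem with $(\lambda,D_1,D_2,p)=(1,d,k^2,p)$ and then eliminating the exceptional sets $\mathcal{S}$, $\mathcal{F}$, $\mathcal{G}_1$, $\mathcal{H}_1$ via Ljunggren's and Cohn's theorems, whereas you attempt to reprove the uniqueness from scratch out of Bilu--Hanrot--Voutier. Unfortunately the central step of your argument fails. From $|V_{m_1}|=|V_{m_2}|=2k$ and the norm identity $V_m^2-(\gamma-\bar{\gamma})^2U_m^2=4p^{hm}$, a primitive prime divisor $q$ of $U_{m_2}$ yields only the congruence $4k^2\equiv 4p^{hm_2}\pmod{q}$, which is not a contradiction and does not force $q\mid p$. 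The exceptional case itself refutes the mechanism: for $\gamma=1+\sqrt{-2}$ (so $P=2$, $Q=3$) one has $U_1=1$, $U_2=2$, $U_3=1$, $U_4=-4$, $U_5=-11$, and $11$ \emph{is} a primitive divisor of $U_5$ (it divides neither $(\gamma-\bar{\gamma})^2=-8$ nor any of $U_1,\dots,U_4$), yet the two solutions $(x,n)=(1,1)$ and $(11,5)$ of $2x^2+1=3^n$ coexist; consistently, $4\cdot 3^5=972\equiv 4=4k^2\pmod{11}$. So an argument that derived a contradiction from the mere existence of a primitive divisor of $U_{m_2}$ would wrongly exclude the very exception the proposition records.

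The underlying problem is that you built the Lucas pair inside $\QQ(\sqrt{-d})$, where the \emph{fixed} quantity $k$ is the trace part; two solutions then give $|V_{m_1}|=|V_{m_2}|$, a statement about the companion sequence that primitive-divisor theorems do not control. The way BHV is actually brought to bear on equations of this shape (and essentially what Bugeaud--Shorey do) is to form the Lehmer pair $\alpha=x_1\sqrt{d}+k\sqrt{-1}$, $\beta=\bar{\alpha}$ in $\QQ(\sqrt{d},\sqrt{-1})$, so that the fixed quantity sits in the $(\alpha-\beta)$-part; a second solution then forces some Lehmer number $U_t$ to equal $\pm 1$, i.e.\ to have \emph{no} primitive divisor, and BHV together with its exceptional table finishes. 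Repairing your sketch along these lines amounts to redoing a substantial part of the Bugeaud--Shorey paper, which is why citing their theorem, as the paper does, is the economical choice. Two smaller points: your $p$-adic descent changes $k$, so the exception at $(d,k)=(2,1)$ propagates to $(d,k)=(2,3^j)$ unless $\gcd(k,p)=1$ is assumed; and the claimed sweep of the BHV exceptional table is asserted rather than performed.
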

In order to prove this proposition, we need to recall the following result of Bugeaud and Shorey \cite{BS01}. Before stating this result, one needs to introduce some definitions and notations.

Let $F_\ell$ be denote the $\ell$-th term in the Fibonacci sequence defined by $F_{\ell+2}=F_\ell+F_{\ell+1}$ for $\ell\geq 0$ with the initials $F_0=0$ and $F_1=1$. Analogously, $L_\ell$ denotes the $\ell$-th term in the Lucas
sequence defined by $L_{\ell+2}=L_\ell+L_{\ell+1}$ for $\ell\geq 0$ with the initials $L_0=2$ and $L_1=1$.
For $\lambda\in \{1, \sqrt{2}, 2\}$, we define the subsets $\mathcal{F}, \ \mathcal{G}_\lambda, ~ \mathcal{H}_\lambda\subset \mathbb{N}\times\mathbb{N}\times\mathbb{N}$ by
\begin{align*}
\mathcal{F}&:=\{(F_{\ell-2\varepsilon},~L_{\ell+\varepsilon},~F_\ell)\mid
\ell\geq 2,~ \varepsilon\in\{\pm 1\}\},\\
\mathcal{G}_\lambda&:=\{(1,4p^r-1,p)\mid \text{$p$ is an odd prime and } r\geq 1 \text{ is an integer}\},\\
\mathcal{H}_\lambda&:=\left\{(D_1,D_2,p)\,\left|\,
\begin{aligned}
&\text{$D_1$, $D_2$ and $p$ are mutually coprime positive}\\
&\text{integers with $p$ an odd prime and there exist}\\
&\text{positive integers $r$, $s$ such that $D_1s^2+D_2=\lambda^2p^r$}\\
&\text{and $3D_1s^2-D_2=\pm\lambda^2$}
\end{aligned}\right.\right\},
\end{align*}
except when $\lambda =2$, in which case the condition ``odd''
on the prime $p$ should be removed from the definitions of $\mathcal{G}_\lambda$
and $\mathcal{H}_\lambda$.

\begin{thma}[Bugeaud and Shorey, \cite{BS01}]\label{BST}
Given $\lambda\in \{1, \sqrt{2}, 2\}$, a prime $p$ and positive co-prime integers $D_1$ and $D_2$, the number of positive integer solutions $(x, y)$ of the equation
 \begin{equation}\label{BSe}
  D_1x^2+D_2=\lambda^2p^y
 \end{equation}
is at most one except for $$
(\lambda,D_1,D_2,p)\in\mathcal{S}:=\left\{\begin{aligned}
&(2,13,3,2),(\sqrt 2,7,11,3),(1,2,1,3),(2,7,1,2),\\
&(\sqrt 2,1,1,5),(\sqrt 2,1,1,13),(2,1,3,7)
\end{aligned}\right\}
$$
and $(D_1, D_2, p)\in
\mathcal{F}\cup \mathcal{G_\lambda}\cup \mathcal{H_\lambda}$.
\end{thma}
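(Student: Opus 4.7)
The plan is to prove the theorem in three movements: a reduction of the equation to a problem about primitive prime divisors in a Lucas or Lehmer sequence, an appeal to the Bilu--Hanrot--Voutier classification of defective such sequences, and a finite case analysis that isolates the exceptional families $\mathcal{F}$, $\mathcal{G}_\lambda$, $\mathcal{H}_\lambda$ and the sporadic set $\mathcal{S}$.

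First I would embed $D_1 x^2 + D_2 = \lambda^2 p^y$ into the imaginary quadratic field $K = \mathbb{Q}(\sqrt{-D_1 D_2})$ and factor the left hand side as
$$ (x\sqrt{D_1} + \sqrt{-D_2})\,(x\sqrt{D_1} - \sqrt{-D_2}) \;=\; \lambda^2\, p^y $$
in the appropriate order of $K$. The coprimality of $D_1$ and $D_2$, together with the presence of the square factor $\lambda^2$, forces the two factors on the left to be coprime away from the prime $2$; the different values $\lambda \in \{1, \sqrt{2}, 2\}$ are precisely what is needed to absorb the possible ramification at $2$. This constrains $p$ to split (or, in the $\lambda = 2$ case, possibly ramify) as $p = \mathfrak{p}\bar{\mathfrak{p}}$ and identifies the factor $x\sqrt{D_1} + \sqrt{-D_2}$, up to a unit and a controlled power of $\lambda$, with the $y$th power of an element supported on $\mathfrak{p}$.

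Next, if $(x_1,y_1)$ and $(x_2,y_2)$ are two distinct solutions with $y_1 < y_2$, setting $\alpha = x_1\sqrt{D_1} + \sqrt{-D_2}$ and $\beta = x_1\sqrt{D_1} - \sqrt{-D_2}$ produces a Lehmer pair; the Lehmer number of index $y_2/y_1$ attached to $(\alpha,\beta)$ is then forced to have no primitive prime divisor outside $\{2,p\}$. The Bilu--Hanrot--Voutier theorem on primitive divisors of Lucas and Lehmer sequences bounds such an index by $30$ and, crucially, classifies the finitely many defective pairs $(\alpha,\beta)$ up to equivalence. The family $\mathcal{F}$ arises from the classically known defective Fibonacci/Lucas pairs; $\mathcal{G}_\lambda$ arises from pairs whose quotient has the shape $(1\pm\sqrt{1-4p^r})/2$; and $\mathcal{H}_\lambda$ arises from the auxiliary simultaneous system that appears when the Lehmer index equals $3$.

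The main obstacle is the last step, where one must patiently translate the finite Bilu--Hanrot--Voutier catalogue back into the $(D_1,D_2,p,\lambda)$ language, confirm that every surviving defective pair corresponds to a triple in $\mathcal{F}\cup\mathcal{G}_\lambda\cup\mathcal{H}_\lambda$ or to a sporadic quadruple in $\mathcal{S}$, and carry rigorous control of $2$-adic valuations throughout so that nothing is miscounted at the prime $2$. Any slip in the ramification analysis there can either omit an entry of $\mathcal{S}$ or spuriously introduce a new exception, so this combinatorial bookkeeping (rather than any single deep inequality) is the genuine technical heart of the argument.
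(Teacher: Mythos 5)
The statement you are proving is Theorem~\ref{BST}, which this paper does not prove at all: it is quoted verbatim from Bugeaud and Shorey \cite{BS01} as an external input to the proof of Proposition~\ref{PB}. So there is no in-paper argument to compare yours against; the only meaningful comparison is with the original proof in \cite{BS01}. On that score your outline is faithful to the actual strategy: one associates to two solutions a Lehmer pair built from $(x\sqrt{D_1}+\sqrt{-D_2})/\lambda$ and its conjugate, observes that the relevant Lehmer number has no primitive divisor, invokes the Bilu--Hanrot--Voutier classification of defective Lucas/Lehmer pairs, and then translates the defect catalogue back into the $(\lambda,D_1,D_2,p)$ language. Your identifications of where $\mathcal{F}$, $\mathcal{G}_\lambda$ and $\mathcal{H}_\lambda$ come from (Fibonacci/Lucas pairs, the $x=1$ pairs with trace $\pm 1$ and norm $p^r$, and the index-$3$ real/imaginary-part system $D_1s^2+D_2=\lambda^2p^r$, $3D_1s^2-D_2=\pm\lambda^2$, respectively) are consistent with the definitions given in the paper.

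That said, what you have written is a plan, not a proof. You yourself flag the decisive step --- the $2$-adic ramification bookkeeping and the exhaustive translation of the BHV defect tables into the seven sporadic quadruples of $\mathcal{S}$ and the three exceptional families --- and then do not carry it out. Since the entire content of the theorem is that the exceptional set is \emph{exactly} $\mathcal{S}$ together with $\mathcal{F}\cup\mathcal{G}_\lambda\cup\mathcal{H}_\lambda$, deferring that case analysis means the statement is not established. For the purposes of this paper the correct move is simply to cite \cite{BS01}, as the authors do; if you intend to reprove the result, the missing case analysis is the whole of the work.
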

We also recall the following result of Ljunggren \cite{LJ43} which will be needed to prove Proposition \ref{PB}.
\begin{thma}\label{WT}
For an odd integer $n$, the only solution to the equation
\begin{equation}
\frac{y^n-1}{y-1}=x^2
\end{equation}
in positive integers $x,y$ and $n$ with $y>1$ is $(11, 3, 5)$.
\end{thma}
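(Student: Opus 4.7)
The plan is to treat the equation $(y^n - 1)/(y - 1) = x^2$ with odd $n \geq 3$ by first reducing to the case of prime exponent and then handling $n = 3$, $n = 5$, and $n \geq 7$ in turn. The initial reduction rests on the cyclotomic factorisation $(y^n - 1)/(y - 1) = \prod_{d \mid n,\, d > 1} \Phi_d(y)$. If $n$ is composite (that is, $n$ has two distinct odd prime factors, or $n = p^e$ with $e \geq 2$), one analyses the pairwise greatest common divisors of the $\Phi_d(y)$, each of which divides a prime factor of $n$ by standard results on cyclotomic polynomials, to force the square condition essentially onto an individual factor $\Phi_p(y)$ for some odd prime $p \mid n$. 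A contradiction at $p = 3$ from the next step, combined with the uniqueness of the $n = 5$ solution below, rules out all composite odd $n$.

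For $n = 3$, the argument is elementary: $\Phi_3(y) = y^2 + y + 1$ lies strictly between the consecutive squares $y^2$ and $(y + 1)^2$ for every integer $y \geq 1$, so it is never a perfect square.

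The case $n = 5$ is the substantive one. Factoring $x^2 = \Phi_5(y) = (y - \zeta_5)(y - \zeta_5^2)(y - \zeta_5^3)(y - \zeta_5^4)$ in the cyclotomic ring $\ZZ[\zeta_5]$, the four factors are pairwise coprime away from the unique prime above $5$, so each factor is a unit times a square in $\ZZ[\zeta_5]$. Descending to the real subfield $\ZZ[\varphi]$ with $\varphi = (1 + \sqrt{5})/2$ and exploiting the fundamental unit structure reduces the problem to a Pell-like equation of the shape $A^2 - 5 B^4 = \pm 4$ (or closely related); the integer solutions of such an equation can be enumerated via the continued fraction expansion of $\sqrt{5}$, and only the pair $(x, y) = (11, 3)$ survives the sign and parity constraints. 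For $n = p$ an odd prime at least $7$, the same cyclotomic descent produces a quartic Thue-type equation over the maximal real subfield $\QQ(\zeta_p + \zeta_p^{-1})$, which by classical theorems has only finitely many integer solutions; all of them can be excluded by Ljunggren's method of continued-fraction approximation to algebraic numbers, or, in modern language, by bounds on linear forms in logarithms.

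The main obstacle I anticipate is the $n = 5$ case: because a genuine exceptional solution exists, the unit-group bookkeeping in $\ZZ[\zeta_5]$ has to be carried out with no slack, and tracking signs, congruences modulo powers of $5$, and the parity of the exponent of the fundamental unit through every stage of the descent is delicate. For $n \geq 7$ the absence of sporadic solutions means coarser $p$-adic and archimedean bounds suffice, though explicit case analysis on the residue of $y$ modulo small primes is still required to close out the argument.
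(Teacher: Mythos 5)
The paper does not prove this statement at all: it is quoted as a known theorem of Ljunggren (reference \cite{LJ43}) and imported solely to settle the exceptional case $2x^2+1=3^n$ in the proof of Proposition \ref{PB}. So there is no in-paper argument to compare yours against; your proposal has to stand on its own as a proof of a genuinely deep Diophantine result, and as written it does not.

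Two concrete gaps. First, your reduction from composite odd $n$ to prime exponent is incomplete. From the factorisation $\frac{y^n-1}{y-1}=\prod_{d\mid n,\ d>1}\Phi_d(y)$ and the fact that $\gcd\bigl(\Phi_d(y),\Phi_e(y)\bigr)$ is $1$ or a prime dividing $n$, what you actually obtain for a prime $p\mid n$ is $\Phi_p(y)=p^{\delta}z^2$ with $\delta\in\{0,1\}$, not $\Phi_p(y)=z^2$. The case $\delta=1$ is not vacuous: for $p=3$ the equation $y^2+y+1=3z^2$ has infinitely many solutions (e.g.\ $y=22$ gives $507=3\cdot 13^2$), so your observation that $y^2+y+1$ lies strictly between consecutive squares says nothing about it, and the composite case is not closed. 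Second, for prime $n\geq 7$ you assert that the resulting quartic Thue-type equations can be excluded by Ljunggren's continued-fraction method or by bounds on linear forms in logarithms. That is a statement of intent rather than a proof: finiteness from Siegel--Thue is ineffective in the form you invoke it, linear forms in logarithms yield bounds far too large to be closed out by checking residues of $y$ modulo small primes, and this range of exponents is exactly where the real difficulty of the Nagell--Ljunggren theorem lies. Your $n=5$ sketch is in the right spirit (Ljunggren's own treatment does pass through equations of the shape $x^2-5y^4=\pm 4$), but you concede yourself that the unit bookkeeping is not carried out. Since the result is classical and the paper only needs to quote it, the appropriate move here is to cite \cite{LJ43}; a self-contained proof would require genuine arguments for both the composite-exponent reduction and the case $p\geq 7$.
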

We also need the following result of Cohn \cite{Cohn1} which talks about appearance of squares in the Lucas sequence.
\begin{thma}\label{CT}
The only perfect squares appear in the Lucas sequence are $L_1=1$ and $L_3=4$.
\end{thma}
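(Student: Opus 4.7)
The plan is to prove Cohn's theorem by strong induction on $n$, combining the duplication and triplication identities for the Lucas sequence with the Sophie Germain factorization of $y^4+4$. The base cases are the two known squares $L_1=1$ and $L_3=4$; the inductive hypothesis asserts that $L_m$ is a square only for $m\in\{1,3\}$ whenever $m<n$.

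For even indices, I would apply the duplication identity $L_{2m}=L_m^2-2(-1)^m$. Supposing $L_{2m}=y^2$, one gets $(L_m-y)(L_m+y)=\pm 2$. The two factors on the left have the same parity, so their product is either odd or divisible by $4$, never $\pm 2$. This rules out all $L_{2m}$ with $m\geq 1$, and $L_0=2$ is plainly not a square.

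For odd $n$, the strategy is to split by the residue of $n\pmod 6$, using that $L_n\pmod 4$ has period $6$ with values $2,1,3,0,3,3$ at $n\equiv 0,1,2,3,4,5$. The class $n\equiv 5\pmod 6$ gives $L_n\equiv 3\pmod 4$, immediately excluded. For $n\equiv 1\pmod 6$ one has $L_n\equiv 1\pmod 4$, so $y=\sqrt{L_n}$ is odd; combining the Binet identity $L_n^2-5F_n^2=4(-1)^n$ with $n$ odd yields $y^4+4=5F_n^2$, and the Sophie Germain factorization
\[
y^4+4=(y^2-2y+2)(y^2+2y+2)
\]
produces two factors that are odd and coprime when $y$ is odd (their gcd divides $4y$ but is coprime to $2$ and to $y$). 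Hence one factor equals $b^2$ and the other equals $5a^2$ for positive integers $a,b$ with $ab=F_n$. Completing the square reduces the two possibilities to $b^2-(y-1)^2=1$ and $b^2-(y+1)^2=1$: the second has no solution in positive integers, and the first forces $(b,y)=(1,1)$, giving $L_n=1$ and thus $n=1$. Finally, for $n\equiv 3\pmod 6$, write $n=3m$ with $m$ odd. Using $L_{a+b}=L_aL_b-(-1)^bL_{a-b}$ together with $L_{-m}=(-1)^mL_m$ one derives the triplication identity
\[
L_{3m}=L_m\bigl(L_m^2+3\bigr).
\]
Since $3\nmid L_m$ whenever $m$ is odd (checked against the Pisano period of $L_n\pmod 3$), one has $\gcd(L_m,L_m^2+3)=\gcd(L_m,3)=1$, so both factors must themselves be squares. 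By the inductive hypothesis, $L_m$ is a square only for $m\in\{1,3\}$; the value $m=1$ yields $L_3=L_1(L_1^2+3)=4$, the known square, while $m=3$ yields $L_m^2+3=19$, which is not a square. Hence no new square arises.

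The main obstacle is the case $n\equiv 1\pmod 6$: one must carefully verify that the Sophie Germain factors are coprime for odd $y$, that exactly one absorbs the factor of $5$ (since the two factors are coprime and their product is $5F_n^2$), and then that the resulting Pell-type equations admit only the trivial solution $(b,y)=(1,1)$. The even case and the $n\equiv 3\pmod 6$ case are then essentially routine consequences of the duplication and triplication identities.
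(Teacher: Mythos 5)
The paper offers no proof of this statement at all: it is quoted as Theorem III directly from Cohn \cite{Cohn1}, so the only thing to compare your argument against is Cohn's original one. Your proof is correct and self-contained, and it takes a genuinely different route. Cohn's 1964 argument proceeds via the congruence $L_{n+2k}\equiv -L_n\pmod{L_k}$ for suitable even $k$ together with Jacobi-symbol computations modulo terms $L_{2^t}$; you instead use the Pell-type identity $L_n^2-5F_n^2=4(-1)^n$, so that $L_n=y^2$ with $n$ odd forces $y^4+4=5F_n^2$, and then the Sophie Germain factorization collapses everything to $b^2-(y\mp 1)^2=1$. All the supporting steps check out: the factors $y^2\pm 2y+2$ are coprime for odd $y$ (an odd common divisor divides $4y$, hence $y$, hence $2$); since they are coprime exactly one of them absorbs the odd power of $5$ in $5F_n^2$; the duplication identity $L_{2m}=L_m^2-2(-1)^m$ kills every even index by the parity count on $(L_m-y)(L_m+y)=\pm 2$; the residues of $L_n$ modulo $4$ do have period $6$ with the values you list, disposing of $n\equiv 5\pmod 6$; and in the case $n\equiv 3\pmod 6$ the identity $L_{3m}=L_m(L_m^2+3)$ for odd $m$, the fact that $3\mid L_j$ only when $j\equiv 2\pmod 4$, and the terminal checks $L_1^2+3=4$ (square) and $L_3^2+3=19$ (non-square) make the strong induction close properly. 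What your route buys is a short, elementary, essentially computation-free proof tailored to the square case; Cohn's method is heavier but more uniform, which is why it also yields the companion results (Fibonacci squares, $L_n=2y^2$, etc.) in the same paper.
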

\begin{proof}[\bf Proof of Proposition \ref{PB}]
Here, $(\lambda, D_1, D_2, p)=(1, d, k^2, p)$. Thus if   $(1, d, k^2, p) \in \mathcal{S}$, then  $d=2, k=1, p=3$, and hence \eqref{de1} gives
$2x^2+1=3^n$. Therefore by putting $y=3$ in \eqref{WT} we see that $(x, n)=(11, 5)$ is the only solution in positive integer of this equation.

We now assume that $(d, k^2, p) \in \mathcal{F}$. Then $L_{\ell+\varepsilon}=k^2$, and thus by Theorem \ref{CT} only possible values of $\ell$ are $0, 1, 2$ and $4$. Again utilizing $F_\ell=p$, one gets $\ell=4$ which corresponds to $p=3,~ k=2$ and $\varepsilon=-1$. Therefore by using $F_{\ell-2\varepsilon}=d$, we get $d=8$. This shows that $2\mid 3$ by \eqref{de1} since $k=2$ and $p=3$. Hence $(d, k^2, p) \not\in \mathcal{F}$.

Again, if $(d,k^2,p)\in \mathcal{G}_1$ then $4p^r-1=k^2$ for some integer $r\geq 1$. This shows that $k^2\equiv 3\pmod 4$ which is not true.

Finally, if $(d, k^2,p) \in \mathcal{H}_1$ then there are positive integers $r$ and $s$ such that
\begin{equation}\label{eq3.4}
3ds^2-k^2=\pm 1
\end{equation}
and
\begin{equation}\label{eq3.5}
ds^2+k^2=p^r.
\end{equation}
One can conclude by reading \eqref{eq3.4} modulo $3$ that $3ds^2-k^2=1$ is not possible, and thus  \ref{eq3.4}) becomes,
$$3ds^2-k^2=- 1.$$
This equation together with (\ref{eq3.5}) would  imply
$$4k^2=3p^r+1.$$
This further implies $$(2k-1)(2k+1)=3p^r.$$
This leads to the following possibilities (since $p$ is prime):
$$\begin{cases}
2k-1=1,\\
2k+1=3p^r,
\end{cases}$$
or
$$\begin{cases}
2k-1=3,\\
2k+1=p^r.
\end{cases}$$
The first pair of equations imply $2=3p^r-1$ which is not possible. Again, the last pair of equations implies $k=2,~ p=5$ and $r=1$. Thus \eqref{eq3.5} gives $d=1$ which is a contradiction. Hence by Theorem \ref{BST}, we complete the proof.
\end{proof}
We now recall the following result \cite[Proposition 2.1]{CHKP} which will be needed in the proof of the next result.
\begin{thma}\label{PC}
Let $d\equiv 5\pmod 8$ be an integer and $q$ a prime. For any odd integers $a$ and $b$, the following holds:
$$\left(\frac{a+b\sqrt{d}}{2}\right)^q\in \mathbb{Z}(\sqrt{d})\text{ if and only if } q=3.$$
\end{thma}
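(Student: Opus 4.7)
The plan is to reduce everything modulo the (inert) prime $2$ in the ring of integers $\bcO_K = \ZZ[\omega]$ of $K = \QQ(\sqrt d)$, where $\omega = \tfrac{1+\sqrt d}{2}$, and convert the $q$-th power condition into an order calculation in the residue field. The first observation is that since $d\equiv 5\pmod 8$, the minimal polynomial $X^2 - X + \tfrac{1-d}{4}$ of $\omega$ reduces modulo $2$ to $X^2+X+1$, which is irreducible over $\mathbb F_2$. Hence $2\bcO_K$ is a prime ideal and $\bcO_K/2\bcO_K \cong \mathbb F_4$. Writing $\alpha = \tfrac{a+b\sqrt d}{2} = \tfrac{a-b}{2} + b\omega$ (valid because $a,b$ are both odd), I would first verify that $\alpha$ is a unit modulo $2$: the norm is $N(\alpha) = \tfrac{a^2-b^2 d}{4}$, and since $a,b$ odd give $a^2\equiv b^2 \equiv 1\pmod 8$ and $b^2 d \equiv 5\pmod 8$, we get $a^2-b^2 d \equiv -4\pmod 8$, so $N(\alpha)$ is odd. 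Thus the image $\bar\alpha$ lies in $\mathbb F_4^\times$.

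The key reformulation is that an element $\beta = x + y\omega \in \bcO_K$ (with $x,y\in\ZZ$) belongs to $\ZZ[\sqrt d]$ if and only if $y$ is even. Indeed, substituting $\omega = \tfrac{1+\sqrt d}{2}$ gives $\beta = (x+\tfrac{y}{2}) + \tfrac{y}{2}\sqrt d$, and both coefficients are integers precisely when $2\mid y$. Translating this to residues, $\beta \in \ZZ[\sqrt d]$ exactly when $\bar\beta \in \bcO_K/2\bcO_K$ lies in the prime subfield $\mathbb F_2 \subset \mathbb F_4$. Applied to $\beta = \alpha^q$, the statement to be proved becomes: $\bar\alpha^q \in \mathbb F_2$ if and only if $q = 3$.

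Because $b$ is odd, we have $\bar\alpha = c + \omega$ in $\bcO_K/2\bcO_K$ for some $c\in\mathbb F_2$, so $\bar\alpha \in \{\omega,\,1+\omega\} = \{\omega,\,\omega^2\}$. In the cyclic group $\mathbb F_4^\times$ of order $3$, both of these elements have order exactly $3$. Since $\bar\alpha \in \mathbb F_4^\times$, the power $\bar\alpha^q$ is nonzero, and the only nonzero element of $\mathbb F_4$ lying in $\mathbb F_2$ is $1$. Therefore $\bar\alpha^q \in \mathbb F_2$ if and only if $\bar\alpha^q = 1$, which happens if and only if $3\mid q$. For a prime $q$ this forces $q=3$; conversely, $q=3$ gives $\bar\alpha^3 = 1$, so $\alpha^3 \in \ZZ[\sqrt d]$, completing the equivalence.

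The main obstacle is essentially the first step of ensuring the correct arithmetic setup: one must pin down that $2$ is inert (which uses the hypothesis $d\equiv 5\pmod 8$ in an essential way) and that $\alpha$ is a unit modulo $2$ so that we may work inside the cyclic group $\mathbb F_4^\times$. Once this is in place the rest is a one-line cyclic-group computation using that $\mathbb F_4^\times$ has order $3$ and $\bar\alpha$ generates it; no induction on $q$ or case analysis by binomial expansion of $(a+b\sqrt d)^q$ is needed.
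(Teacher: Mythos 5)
Your proof is correct, and there is in fact no in-paper argument to compare it against: the statement is recalled in the paper as Theorem \ref{PC} from \cite[Proposition 2.1]{CHKP} and is not proved there. Your route --- reduce modulo $2$, note that $d\equiv 5\pmod 8$ makes $X^{2}-X+\tfrac{1-d}{4}$ reduce to the irreducible polynomial $X^{2}+X+1$ over $\mathbb{F}_{2}$ so that $\ZZ[\omega]/2\ZZ[\omega]\cong\mathbb{F}_{4}$, check that $a^{2}-b^{2}d\equiv -4\pmod 8$ so $N(\alpha)$ is an odd integer and $\bar\alpha\in\mathbb{F}_{4}^{\times}$, identify $\ZZ[\sqrt{d}]$ inside $\ZZ[\omega]$ as the preimage of $\mathbb{F}_{2}$ under reduction (using uniqueness of coordinates in the basis $\{1,\omega\}$), and observe that $\bar\alpha\in\{\bar\omega,\bar\omega^{2}\}$ has order exactly $3$ in the cyclic group $\mathbb{F}_{4}^{\times}$ --- is complete, and every step checks out, including both directions of the equivalence. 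Two minor presentational points: the theorem allows any integer $d\equiv 5\pmod 8$, not necessarily squarefree, so $\ZZ[\omega]$ need not equal the maximal order $\bcO_{K}$; since your argument never uses maximality, you should simply state that you work in the order $\ZZ[\omega]\cong\ZZ[X]/\bigl(X^{2}-X+\tfrac{1-d}{4}\bigr)$, which contains $\alpha=\tfrac{a-b}{2}+b\omega$ and hence $\alpha^{q}$. With that adjustment the argument is airtight, and it is cleaner than the direct alternative of expanding $\bigl(\tfrac{a+b\sqrt{d}}{2}\bigr)^{q}$ and chasing parities of the resulting binomial coefficients.
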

We now prove the following result which is the main ingredient in the proof of Theorem \ref{T1}.
\begin{prop}\label{P2}
Let $k,p, n, d$ be as in Theorem \ref{T1} and $m$ the positive integer such that $k^2-p^n=-m^2d$. Then $\alpha =k+m\sqrt{-d}$ is not an $q^{th}$ power of an element in the ring of integers, $\mathcal{O}_{K_{k,p, n}}$ of the field $K_{k,p, n}$ for any prime divisor $q $ of $n$.
\end{prop}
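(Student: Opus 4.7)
The plan is to argue by contradiction: suppose $\alpha = \beta^q$ for some $\beta \in \mathcal{O}_{K_{k,p,n}}$ and some prime $q\mid n$. Taking norms gives $N(\beta)^q = N(\alpha) = k^2 + m^2 d = p^n$, whence $N(\beta) = p^{n/q}$. I would then split the argument according to $d \pmod 4$, and when $d \equiv 3\pmod 4$ further split according to the parity of the coefficients of $\beta$ in the basis $\{1,(1+\sqrt{-d})/2\}$.

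\textbf{Case 1:} $\beta \in \ZZ[\sqrt{-d}]$. This covers $d \not\equiv 3\pmod 4$ automatically, as well as the sub-case $d \equiv 3\pmod 4$ with $\beta = (a'+b'\sqrt{-d})/2$ and $a',b'$ both even (after halving). Write $\beta = a + b\sqrt{-d}$. Expanding $\beta^q$ and matching with $k + m\sqrt{-d}$: since $q$ is odd, every term of the real part carries a factor of $a$, whence $a \mid k$, and similarly $b \mid m$. Reducing the real part modulo $d$ yields $a^q \equiv k \pmod d$. Writing $a = \pm t$ with $t$ a positive divisor of $k$, one obtains $t^q \equiv \pm k \pmod d$; hypothesis (ii) forbids $t$ from being a proper divisor of $k$, so $t=k$ and $a = \pm k$. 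The norm equation now reads $k^2 + b^2 d = p^{n/q}$, providing a second positive-integer solution $(|b|, n/q)$ of \eqref{de1}, distinct from $(m,n)$ since $n/q < n$; here $b \neq 0$, for otherwise $n/q$ would be even, contradicting that $n$ is odd. The exceptional triple of Proposition~\ref{PB} forces $k=1$, which is excluded by hypothesis~(i), and we contradict Proposition~\ref{PB}.

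\textbf{Case 2:} $d \equiv 3 \pmod 4$ and $\beta = (a+b\sqrt{-d})/2$ with $a,b$ both odd. Reducing $a^2 + b^2 d = 4 p^{n/q}$ modulo $8$ and using $a^2 \equiv b^2 \equiv 1\pmod 8$: if $d \equiv 7 \pmod 8$ then $a^2 + b^2 d \equiv 0 \pmod 8$, forcing $N(\beta)$ to be even, which is impossible since $p^{n/q}$ is odd. Hence $d \equiv 3 \pmod 8$, equivalently $-d \equiv 5 \pmod 8$. For $q \neq 3$, Theorem~\ref{PC} (applied with $-d$ in place of $d$) gives $\beta^q \notin \ZZ[\sqrt{-d}]$, contradicting $\alpha \in \ZZ[\sqrt{-d}]$. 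For $q=3$, expanding $\beta^3 = \alpha$ yields $a^3 - 3ab^2 d = 8k$ together with the norm identity $a^2 + b^2 d = 4p^{n/3}$; eliminating $b^2 d$ leaves $a(a^2 - 3p^{n/3}) = 2k$, so (as $a$ is odd) $a \mid k$. Writing $a = \pm k'$ with $k' \mid k$ and $k' > 0$ odd produces the explicit formula $p^{n/3} = (k'^3 \mp 2k)/(3k')$, which matched against hypothesis~(iii) closes the argument.

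The main obstacle I anticipate is the bookkeeping in the $q=3$ subcase of Case~2: one must verify that the explicit formula derived for $p^{n/3}$ is genuinely excluded by hypothesis~(iii) for both sign choices and every odd positive divisor $k'$ of $k$, and also that the exceptional case of Proposition~\ref{PB} is ruled out precisely by hypothesis~(i). Everything else reduces to a standard binomial expansion of $(a+b\sqrt{-d})^q$, a modular reduction mod~$d$, and a single parity argument modulo~$8$.
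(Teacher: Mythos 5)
Your proof is correct and follows essentially the same route as the paper's: the same split on $d \bmod 4$ and on the parity of $a,b$, the binomial expansion giving $a\mid k$ and the reduction mod $d$ against hypotheses (i)--(ii), the norm comparison feeding two solutions into Proposition~\ref{PB}, and in the half-integral case the mod $8$ argument, Theorem~\ref{PC} forcing $q=3$, and the elimination leading to $p^{n/3}=(k'^3\mp 2k)/(3k')$ against hypothesis (iii). You are in fact slightly more careful than the paper on two minor points (explicitly ruling out the exceptional triple of Proposition~\ref{PB} and the possibility $b=0$), but the argument is the same.
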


\begin{proof}
Let $q$ be a prime number such that $q\mid n$. Then $q$ is odd since $n$ is odd.

We first consider the case when $d \equiv 1, 2 \pmod 4$. Suppose $\alpha=\left(a+b\sqrt{-d}\right)^\ell$ for some integers $a$ and $b$. That is, $$k+m \sqrt{-d} =(a+b\sqrt{-d})^q.$$
We now compare the real parts to get,
\begin{equation}\label{eq1}
k=a^q+\sum_{\ell=1}^{\frac{q-1}{2}} \binom{q}{2\ell} a^{q-2\ell}b^{2\ell}(-d)^\ell.
\end{equation}
This shows that $a\mid k$ and thus $a=\pm 1, \pm t$ or $ \pm k$ for some proper divisor $t$ of $k$.\\
If $a=\pm 1$, then \eqref{eq1} implies
$$k=\pm 1\pm \sum_{\ell=1}^{\frac{q-1}{2}} \binom{q}{2\ell}b^{2\ell}(-d)^\ell.$$
Reading this modulo $d$, we get $k\equiv \pm \pmod d$. This contradicts to (i). \\
When $a=\pm t$, then \eqref{eq1} becomes
$$k=\pm t^q\pm \sum_{\ell=1}^{\frac{q-1}{2}} \binom{q}{2\ell} t^{q-2\ell}b^{2\ell}(-d)^\ell.$$
As in previous case, reading this modulo $d$ one gets
$t^q\equiv \pm k\pmod d$. This contradicts to (ii).

We now consider the remaining case when $a=\pm k$. In this case, we have $k+m\sqrt{-d}=\left(\pm k+b\sqrt{-d}\right)^q$. We take the norm on both sides to get,
$$p^n=(k^2+b^2d)^{q}.$$
This implies
\begin{equation}\label{eq2}
db^2+k^2=p^{n/q}.
\end{equation}
Also we have,
\begin{equation}\label{eq3}
dm^2+k^2=p^n.
\end{equation}
Since $q$ is a prime divisor of $n$, so that \eqref{eq2} and \eqref{eq3} together show that $(|b|,n/q)$ and $(m,n)$ are two
distinct solutions of (\ref{de1}) in positive integers $x$ and $y$. This contradicts to Proposition \ref{PB}

We now consider the case when $d \equiv 3 \pmod 4$.
If $\alpha$ is an $q^{th}$ power of some element in $\mathcal{O}_{K_{k,p, n}}$,
then there are some rational integers $a,b$ with same parity such that
$$k+m\sqrt{-d}=\left( \frac{a+b\sqrt{-d}}{2} \right)^{q}.$$
If both $a$ and $b$ are even then one can proceed as in the case
$d \equiv 1,2 \pmod 4$, and gets a contradiction to Proposition \ref{PB}. Therefore one assumes that both $a$ and $b$ are odd.
Taking norm on both sides, one gets
\begin{equation}\label{eq4}
4p^{n/q}=a^2+b^2d.
\end{equation}

Since $a,b$ and $p$ are odd, so that reading (\ref{eq4}) modulo $8$, we obtain
$d \equiv 3 \pmod 8$. As $k+m\sqrt{-d}=\left( \frac{a+b\sqrt{d}}{2} \right)^q \in \mathbb{Z}[\sqrt{-d}]$ and $-d\equiv 5\pmod 8$, so that by Theorem \ref{PC} we obtain $q=3$.
Therefore,
$$k+m\sqrt{d}=\left( \frac{a+b\sqrt{-d}}{2} \right)^3.$$
We compare the real parts to get,
\begin{align}\label{eq5}
8k&=a(a^2-3b^2d).
\end{align}
Since $a$ is odd, so that $a=\pm 1,~\pm k'$ for some odd divisor $k'$ (other than $\pm 1$) of $k$.\\
If $a=1$ then \eqref{eq5} gives $8k=1-3b^2d$. This is not possible as $d$ and $k$ are positive integers.

For $a=-1$, \eqref{eq4} and \eqref{eq5} become,
$$
\begin{cases}
b^2d+1=4p^{n/3},\\
3b^2d-1=8k.
\end{cases}$$
This further implies
$$p^{n/3}=\frac{2k+1}{3},$$
which violates our assumption.
 
For $a=\pm k'$, \eqref{eq4} and \eqref{eq5} become,
$$
\begin{cases}
b^2d+k'^2=4p^{n/3},\\
-3b^2d+k'^2=\pm 8k/k'.
\end{cases}$$
This further implies
$$p^{n/3}=\frac{k'^3\pm 2k}{3k'}.$$
This again violates our assumption. Thus, we complete the proof.
\end{proof}

We are now in a position to present the proof of Theorem \ref{T1}.

\begin{proof}[\bf Proof of Theorem \ref{T1}]
Let $m$ be the positive integer such that $k^2-p^n=m^2d$ and $\alpha =k+m\sqrt{-d}$.
We note that $\gcd(\alpha,~\bar{\alpha})=1$ and $N(\alpha)=\alpha \bar{\alpha}=p^n$. Therefore, we can write $(\alpha)= \mathfrak{a}^n$ for some ideal $\mathfrak{a}$ in $ \mathcal{O}_{K_{k, p, n}}$. We assume that $\mathfrak{A}$ is the ideal class containing $\mathfrak{a}$ in the class group of $K_{k,p,n}$. If the order of $\mathfrak{A}$ is less than $n$, then we obtain an odd prime divisor $q$ of $n$ and an element $\beta \in \mathcal{O}_{K_{k, p,n}}$ satisfying $(\alpha)=(\beta)^{q}$. Since $d>3$ is square-free, so that the only units in $\mathcal{O}_{K_{k, p,n}}$ are $\pm1$, and thus these units can be absorbed into the $q$-th power of $\beta$. Hence we obatin $\alpha= \beta^q$ which contradicts to Proposition \ref{P2}. Therefore, the order of $\mathfrak{A}$ is $n$ which completes the proof.
\end{proof}

For an odd positive integer $n$ not divisible by $3$ and a fixed psoitive integer $k$, the conditions (i) and (ii) in Theorem \ref{T1} hold very often. This can be proved using Siegel's theorem on integral points on affine curves. More precisely, we prove the following result to show the infinitude of the imaginary quadratic fields of the form $K_{k, p, n}$ whose class group has an element of order $n$. We apply a celebrated theorem of Siegel \cite{LS} to prove the following result.
\begin{thm}\label{T2}
Let $n\geq 5$ be an odd integer not divisible by $3$. For each positive integer $k$, the class group of $K_{k, p, n}$ has an element of order $n$ for infinitely many odd primes $p$.
\end{thm}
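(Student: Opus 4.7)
The plan is to verify that, for fixed $k$ and $n$ as in the statement, all but finitely many odd primes $p$ satisfy the hypotheses of Theorem~\ref{T1}, so that the conclusion follows immediately from that result. Since $3 \nmid n$, condition (iii) is vacuous; only (i) and (ii) require control. Writing $p^n - k^2 = m^2 d$ with $d$ positive squarefree, condition (i) can fail only when $d \mid k-1$ or $d \mid k+1$, restricting $d$ (for $k \geq 2$) to the finite set of squarefree divisors of $k^2-1$. Similarly, (ii) can fail only when $d \mid t^q - k$ or $d \mid t^q + k$ for some proper divisor $t$ of $k$ and some prime $q \mid n$; outside the degenerate case $t^q = k$ (which, since $t$ and $k$ are positive, is the only way $t^q = \pm k$ can hold), this again leaves only finitely many possible values of $d$.

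The key step is then to fix one such exceptional squarefree $d$ and bound the primes $p$ giving rise to it. Such primes correspond exactly to integer points $(x,y) = (m,p)$ on the affine plane curve
$$C_{d,k,n} : \quad y^n = d\,x^2 + k^2.$$
A Riemann--Hurwitz calculation on the $n$-sheeted map $(x,y)\mapsto x$ gives the smooth projective model of $C_{d,k,n}$ genus $(n-1)/2$: the two distinct complex roots of $dx^2+k^2$ each contribute a totally ramified point (so $e = n$), and since $\gcd(n,2)=1$ there is a unique (totally ramified) point above $\infty$, giving $2g - 2 = -2n + 3(n-1) = n - 3$. For $n \geq 5$ this is at least $2$, so Siegel's theorem \cite{LS} on integral points of affine curves forces $C_{d,k,n}(\ZZ)$ to be finite; in particular only finitely many primes $p$ arise as its second coordinates. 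Taking the union over the finitely many exceptional $d$, only finitely many primes $p$ can violate (i) or (ii), so infinitely many primes $p$ satisfy all the hypotheses of Theorem~\ref{T1}, and the result follows.

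The main obstacles are the Riemann--Hurwitz computation (along with the routine verification that $y^n - dx^2 - k^2$ is irreducible) needed to invoke Siegel, and a careful treatment of the degenerate pair $(t,q)$ with $t^q = k$, which would make the bad-$d$ set infinite for that particular pair. The boundary case $k = 1$, for which condition (i) in Theorem~\ref{T1} never holds as stated, is absorbed by the earlier $k=1$ specialization recorded in the remark following Theorem~\ref{T1}.
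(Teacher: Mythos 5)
Your proposal follows essentially the same route as the paper's proof: failure of conditions (i) or (ii) forces the squarefree part $d$ of $p^n-k^2$ into a finite set (the paper records this as the bound $d\le k+t^q$), and Siegel's theorem applied to the affine curve $y^n=dx^2+k^2$ for each such $d$ (the paper simply cites genus $>0$ via \cite{WS} rather than carrying out your Riemann--Hurwitz computation of $g=(n-1)/2$) shows only finitely many primes $p$ can produce an exceptional $d$. The degenerate case $t^q=k$ that you flag as an obstacle is a genuine one, but it is equally unaddressed in the paper, whose divisibility bound on $d$ silently assumes $t^q\ne\pm k$, as is the boundary case $k=1$.
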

\begin{remark} This is a generalization of \cite[Theorem 1.2]{CHKP} which can be obtained by considering $k$ as an odd prime in Theorem \ref{T2}. One can prove this theorem by a similar argument as in \cite[Theorem 1.2]{CHKP}. However for the sake of completeness, we give a proof of it.
\end{remark}
\begin{proof}
Let $n\geq 2$ be an odd integer not divisible by $3$, and $k$ an arbitrary positive integer. Let $p$ be an odd prime such that $\gcd(p, k)=1$. Then by Theorem \ref{T1}, the class group of $K_{k, p, n}$ has an element of order $n$ except for $t^q\equiv \pm k  \pmod {d}$, where $t (\ne k)$ is a divisor of $k$ and $q$ is a prime divisor of $n$. For $t^q\equiv \pm k \pmod {d}$, one gets $d\leq k+t^q$.
For any positive integer $D$, the curve
\begin{equation}\label{eq6}
Dx^2+p^2=y^n
\end{equation}
is an irreducible algebraic curve of genus bigger than $0$ (see \cite{WS}). Thus by Siegel's theorem (see \cite{ES, LS}), it follows that there are only finitely many integral points $(x,y)$ on the curve (\ref{eq6}). Therefore, for each positive integer $d$, there are at most finitely many primes $p$ such that
$$dm^2+k^2=p^n.$$
As $K_{k, p,n}=\mathbb{Q}(\sqrt{-d})$, it follows that there are infinitely many fields of the form $K_{k, p,n}$ for each positive integer $k$. Furthermore, if $p$ is sufficiently large, then for $k^2-p^n=-m^2d$, one gets $d>k+t^q$. Therefore by Theorem \ref{T1}, the class group of $K_{k, p, n}$ has an element of order $n$ for sufficiently large $p$.
\end{proof}
\section{Concluding result}
In Theorem \ref{T1}, we have proved that there exists an element of order $n$ in the class group of $\mathbb{Q}(\sqrt{k^2-p^n})$. By Theorem \ref{thm3.5}, it follows that if we consider the family $\mathbb{Q}(\sqrt{m^2q^2-\ell^n})$ then there exists a non-empty Zariski open set $U$ in the base of the family such that for all $b\in U$, the order of the $n$-torsion subgroup of $\CH^1(S_{\mathbb{Z},b})$ is invariant. Not only that, these $n$-torsion subgroups in the class groups of the family of imaginary quadratic fields mentioned above, varies in a family.

\medskip

\noindent\textbf{Acknowledgements.}
A part of this paper was completed while A. Hoque was visiting Professor Jianya Liu at Shandong University. He is thankful to Professor Liu and the University for hosting him during this project. The authors would also like to thank Professor K. Chakraborty for his valuable suggestions to improve the presentation of this paper.  The authors are grateful to the anonymous referee for a careful reading of the paper and for suggesting several improvements. This work is supported by SERB MATRICS grant (No. MTR/2021/000762) and SERB  CRG grant (No. CRG/2023/007323), Govt. of India. 

\subsection*{Declaration} The authors declare that there are no conflict of interests.

\subsection*{Data Availability Statement} This manuscript has no associate data.

\end{document}